\newtheorem{theorem}{Theorem}[section]
\newtheorem{lemma}[theorem]{Lemma}
\theoremstyle{definition}
\newtheorem{definition}[theorem]{Definition}
\newtheorem{notation}[theorem]{Notation}
\newtheorem{remark}[theorem]{Remark}
\numberwithin{equation}{section}
\def\R{\mathbb{R}}
\newcommand{\hull}{\widetilde{\cup}}
\newcommand{\V}{\textup{Vol}}
\newcommand{\MV}{\textup{MV}}
\newcommand{\C}{\mathbb{C}}
\newcommand{\CP}{\mathbb{CP}}
\newcommand{\K}{\mathcal{K}}
\newcommand{\cS}{\mathcal{S}}
\newcommand{\bb}{\beta}
\newcommand{\om}{\omega}
\newcommand{\De}{\Delta}
\newcommand{\dd}{\partial}
\newcommand{\dbar}{\bar\partial}
\newcommand{\PSH}{\text{PSH}}
\newcommand{\abs}[1]{\lvert#1\rvert}
\def\K{\mathcal{K}}
\def\H{\mathcal{H}}
\begin{document}

\title{The Hausdorff distance and metrics on toric singularity types}

\author{A. Aitokhuehi, B. Braiman, D. Cutler, T. Darvas, R. Deaton, \\ P. Gupta, J. Horsley, V. Pidaparthy, J. Tang}

\date{}

\maketitle
\begin{abstract} Given a compact K\"ahler manifold $(X,\omega)$, due to the work of Darvas--Di Nezza--Lu, the space of singularity types of $\omega$-psh functions admits a natural pseudo-metric $d_\mathcal S$ that is complete in the presence of positive mass. When restricted to model singularity types, this pseudo-metric is a bona fide metric.

In case of the projective space, there is a known one-to-one correspondence between toric model singularity types and convex bodies inside the unit simplex. Hence in this case it is natural to compare the $d_\mathcal S$ metric to the classical Hausdorff metric. We provide precise H\"older bounds, showing that their induced topologies are the same. 

More generally, we introduce a quasi-metric $d_G$ on the space of compact convex sets inside an arbitrary convex body $G$, with $d_\mathcal S = d_G$ in case $G$ is the unit simplex. We prove optimal H\"older bounds  comparing $d_G$ with the Hausdorff metric. Our analysis shows that the H\"older exponents differ depending on the geometry of $G$, with the worst exponents in case $G$ is a polytope, and the best in case $G$ has $C^2$ boundary.
\end{abstract}

\section{Introduction}\label{sec: Introduction}
The interactions between complex and convex geometry are profound and numerous. See \cite{lazerzfeldmustataconvexandcomplex}, \cite{lehmannjiancorrespondencebetweencomplexandconvex}, \cite{Nazarovmahlerusingbergmankernel} for a few more recent instances of this, along with the recent surveys \cite{bosurvey, yanirsurvey} and references therein. In the presence of toric symmetries, complex geometric phenomenon can often be reduced to studying an analogous problem in convex analysis (see \cite{Fultonsbookontoricvarieties}). This note follows this tradition as we will analyze certain metrics introduced by  Darvas--Di Nezza--Lu in complex geometry in the toric context.

\paragraph{Pluripotential theoretic origins.} Let $(X,\om)$ be a compact K\"ahler manifold. For the basics of pluripotential theory on compact K\"ahler manifolds we refer to the excellent textbook by Guedj--Zeriahi \cite{GZbook}. With slight abuse of precision, an upper semicontinuous function $u : X \to \R\cup \{-\infty\}$ is $\om$-plurisubharmonic ($\om$-psh) if $\om_{u} := \om + i\dd\dbar u\geq 0$ in the sense of currents. We denote by $\PSH(X,\om)$ the set of all $\om$-plurisubharmonic functions. 

If $u, v \in \PSH(X,\om)$, we say $u$ is more singular than $v$, denoted by $u \preceq v$, if there exists a constant $C$ such that $u \leq v + C$. We say $u$ and $v$ have the same singularity type, denoted by $u \simeq v$, if $u\preceq v$ and $v \preceq u$. Let $[u]$ denote the equivalence class of the singularity type of $u$. Not all singularity types are created equal. To solve the complex Monge--Amp\`ere equation with prescribed singularities, in \cite{Darvasmonotonicity}  Darvas--Di Nezza--Lu introduced the so called model potentials (see \cite{darvas2023relative} for a detailed exposition). These are the potentials $\phi \in \PSH(X,\om)$ that satisfy 
\[
\phi = (\sup\{ u \in \PSH(X,\om) \, : \, u \preceq \phi, u \leq 0\})^{*}
\]
where $f^{*}(x) = \limsup_{y \to x} f(y)$ is the upper semicontinuous regularization of $f$. Slightly deviating from the literature, we denote by $\cS(X,\om)$ the set of all singularity types of model potentials $\phi \in \PSH(X,\om)$ that have positive non-pluripolar mass ($\int_X \omega^n_\phi>0$). For a detailed study of model potentials, we refer to the recent survey \cite{darvas2023relative}.

In \cite{darvas2019metric}, Darvas--Di Nezza--Lu defined a metric $d_{\cS}$ on $\cS(X,\om)$. The description of $d_{\mathcal{S}}$ involves an embedding into the space of weak Mabuchi geodesic rays, and it is quite involved. However, there is a quasi-metric equivalent to $d_{\mathcal{S}}$ which can be described in terms of non-pluripolar products of potentials. Indeed, in \cite[Proposition 3.5]{darvas2019metric}, the authors prove that if $[u], [v] \in \cS(X,\om)$, then

\begin{equation}\label{eq: interpreting dS with non-pluripolar volume}
d_{\mathcal{S}}([u], [v]) \leq \sum_{j=0}^{n} \left( 2\int_{X} \om^{j}\wedge \om^{n-j}_{\max(u,v)} - \int_{X} \om^{j}\wedge \om^{n-j}_{u} - \int_{X} \om^{j} \wedge \om^{n-j}_{v} \right) \leq C d_{\mathcal{S}}([u],[v]). 
\end{equation}
Here $C > 0$ is a constant that depends only on $n$.

For the sake of our note, it is enough to consider the particular case of   $X=\CP^{n}$ with the Fubini--Study metric $\omega=\om_{FS}$. In the coordinates $(z_{1}, \dots, z_{n})$, for the embedding of $\C^{n}$ into $\CP^{n}$ (via the map $(z_1,\ldots,z_n) \to [1,z_1,\ldots,z_n]$), the Fubini--Study metric is given by 
\[
\om_{FS} = \frac{\sqrt{-1}}{2}\sum_{j,k=1}^{n}\frac{\dd^{2}}{\dd z_{j} \dd \bar{z}_{k}} \ln (1 + |z|^{2})dz_{j} \wedge d\bar{z}_{k}.
\]

The potential $\ln(1+ |z|^{2})$ is a toric symmetric psh function and its corresponding convex function on $\mathbb{R}^n$ is $K(x) := \ln(1+e^{2x_{1}} + \dots + e^{2x_{n}})$. The closure of the subgradient set $\overline{\dd K(\R^{n})} = \De$ is the unit simplex in $\R^{n}$ given by the convex combination of $0, e_{1}, \dots, e_{n}$, where $e_{i} = (0, \dots, 1 ,\dots, 0)$. 

As we now elaborate, due to \cite[Theorem 7.2]{darvaslogconcavity}, there is a correspondence between the closed convex subsets of $\De$ and model singularity types with toric symmetries of $\om_{FS}$-psh functions. If $P \subset \De$ is a closed convex set, let $h_{P} : \R^{n} \to \R$ be the support function given by
\[
h_{P}(x) = \sup_{p \in P} \langle p, x\rangle.
\]
The corresponding psh function on $(\C^{*})^{n}$ is given by 
\[
H_{P}(z) = h_{P}(\log|z_{1}|, \dots, \log|z_{n}|). 
\]
The function 
\[
\phi_{P}(z) = H_{P}(z) - \frac{1}{2} \ln(1+ |z|^{2})
\]
defined on $(\C^{*})^{n}$ extends to all of $\CP^{n}$ as an $\om_{FS}$-psh function. As pointed out in \cite[Theorem 7.2]{darvaslogconcavity},  $\phi_{P}$ has model singularities. On the other hand, if $\phi \in \PSH(X,\om_{FS})$ has model singularities, and is toric invariant, then we can find a convex function $h$, such that $\phi(z) + \frac{1}{2} \ln (1+ |z|^{2}) = h(\log|z_{1}|, \dots, \log|z_{n}|)$. Then $P = \overline{\dd h(\R^{n})}$ is the set of all subdifferentials of $h$ and $\phi \simeq \phi_{P}$. We refer to \cite[Section 7]{darvaslogconcavity} for more details.

\paragraph{Convex geometric interpretation.}
The expression in the middle of \eqref{eq: interpreting dS with non-pluripolar volume} has an interpretation in terms of convex geometry using mixed volumes of convex bodies. Let $K,L \subset \De$ be closed convex subsets. Let $\phi_{K}, \phi_{L}$ be the corresponding $\om_{FS}$-psh potentials. 
As explained under \cite[equation (37)]{darvaslogconcavity} and \cite{Bayraktarzerodistribution}, the non-pluripolar volume can be interpreted using the classical mixed volumes $MV_j(\cdot,\cdot)$ of convex geometry (see Definition \ref{mixed_vol_def}). We observe that $\max\{h_{K}, h_{L}\} = h_{K\hull L}$ and consequently $\max\{\phi_{K}, \phi_{L}\} = \phi_{K\hull L}$ where $K\hull L$ is the convex hull of $K \cup L$. As a result, the expression in the middle of\eqref{eq: interpreting dS with non-pluripolar volume} with $u$ and $v$ replaced by $\phi_{K}$ and $\phi_{L}$ is equal to 
\begin{equation}\label{eq: equation for quasi-metrics}
d_{\De}(K,L) : = d_\mathcal S(\phi_K, \phi_L)= \sum_{j=0}^{n}\left( 2\MV_{j}(\De, K\hull L) - \MV_{j}(\De, K) - \MV_{j}(\De, L) \right).    
\end{equation}

Here $\MV_{j}(\De, K) = \MV(\De[n-j], K[j])$ is the mixed volume of $\De$ and $K$ with $K$ appearing $j$ times. 

Thus~\eqref{eq: equation for quasi-metrics} induces a quasi-metric on $\mathcal{K}(\De)$, the space of all compact convex subsets of $\De$. A quasi-metric induces a metrizable topology on $\mathcal{K}(\De)$ (see \cite{Paluszynski2009OnQA}). It raises a natural question: is this metrizable topology the same as the one induced by the familiar  Hausdorff metric $d_{\H}$?  In this paper, we answer this question affirmatively. More precisely, we prove the following result that holds more generally for closed convex subsets of a fixed convex body $G$: 

\begin{theorem}\label{thm: Main theorem}
    Let $G \subset \R^{n}$ be a convex body (compact convex set with non-empty interior). Let $\mathcal{K}(G)$ denote the set of all compact convex subsets of $G$. We consider the following expression for $K,L \in \mathcal K(G)$:
    \begin{equation}\label{eq: expression for quasi-metric for arbitrary convex body}
    d_{G} (K, L) := \sum_{j=0}^{n}\left( 2\MV_{j}(G, K\hull L) - \MV_{j}(G, K) - \MV_{j}(G, L) \right)    
    \end{equation}
    There exists $C > 1$ that depends only on $G$ such that 
    \begin{equation} \label{eq: bound for general reference body}
    \frac{1}{C}d_{\H}(K,L)^n \leq d_{G}(K,L) \leq Cd_{\H}(K,L), \ \ K,L \in \mathcal K(G).     
    \end{equation}
\end{theorem}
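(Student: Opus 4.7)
The proof splits naturally into the two inequalities, handled by different techniques.

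\textbf{Upper bound.} This follows from Lipschitz continuity of mixed volumes under the Hausdorff metric. If $K, K' \subset G$ with $d_H(K, K') \leq \epsilon$, then $K \subset K' + \epsilon B$ (with $B$ the unit ball), and multilinearity of $\MV$ gives
\[
\MV_j(G, K) \leq \MV_j(G, K' + \epsilon B) = \sum_{i=0}^{j}\binom{j}{i}\epsilon^{i}\,\MV(G[n-j], K'[j-i], B[i]).
\]
Each mixed volume on the right is bounded by a constant depending only on $G$ and $n$, so the terms with $i \geq 1$ contribute at most $C_G\epsilon$. Using the symmetric inequality together with $d_H(K, K\hull L), d_H(L, K\hull L) \leq d_H(K, L)$ and summing over $j$ yields $d_G(K, L) \leq C\, d_H(K, L)$.

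\textbf{Lower bound.} The key reduction is via the Steiner--Minkowski formula $\V(G + K) = \sum_{j=0}^n \binom{n}{j}\MV_j(G, K)$. Writing $\tilde K := G + K$, $\tilde L := G + L$, $\tilde M := G + (K\hull L)$, each summand in $d_G(K, L)$ is non-negative (by monotonicity of mixed volumes under $K, L \subset K \hull L$), and since $\binom{n}{j} \leq 2^n$,
\[
2^n\, d_G(K, L) \;\geq\; \sum_{j=0}^n \binom{n}{j}\bigl(2\MV_j(G, K\hull L) - \MV_j(G, K) - \MV_j(G, L)\bigr) \;=\; 2\V(\tilde M) - \V(\tilde K) - \V(\tilde L).
\]
Since $\tilde K, \tilde L \subset \tilde M$, the right-hand side equals $\V(\tilde M \setminus \tilde K) + \V(\tilde M \setminus \tilde L)$. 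By symmetry, suppose $l^* \in L$ attains $d(l^*, K) = \epsilon := d_H(K, L)$, and let $\nu$ be the outward unit normal of $K$ at the closest point $k^*$, so that $\langle l^*, \nu\rangle = h_K(\nu) + \epsilon$. Define the cap
\[
C_\epsilon := G \cap \{g \in \R^n : \langle g, \nu\rangle > h_G(\nu) - \epsilon\}.
\]
For any $g \in C_\epsilon$, the point $y := g + l^* \in G + L = \tilde L \subset \tilde M$ satisfies
\[
\langle y, \nu\rangle = \langle g, \nu\rangle + \langle l^*, \nu\rangle > (h_G(\nu) - \epsilon) + (h_K(\nu) + \epsilon) = h_{\tilde K}(\nu),
\]
so $y \notin \tilde K$. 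The translate $l^* + C_\epsilon$ is therefore contained in $\tilde M \setminus \tilde K$, giving $\V(\tilde M \setminus \tilde K) \geq \V(C_\epsilon)$.

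\textbf{Main obstacle.} The proof reduces to the uniform estimate $\V(C_\epsilon) \geq c_G\, \epsilon^n$ over $\nu \in S^{n-1}$, where $c_G>0$ depends only on $G$. Fix any interior ball $B_r(p_0) \subset G$ with $r>0$; then $h_G(\nu) \geq \langle p_0,\nu\rangle + r$, so $\langle g^*(\nu) - p_0, \nu\rangle \geq r$ for every support point $g^*(\nu) \in \arg\max_G \langle \cdot, \nu\rangle$. Consequently, $G$ contains the ``ice-cream cone'' $E := \textup{conv}(B_r(p_0) \cup \{g^*(\nu)\})$; parametrising $E$ via $z = (1-t)\, g^*(\nu) + t\, x$ for $t \in [0, 1]$, $x \in B_r(p_0)$, an elementary slab calculation shows $\V(E \cap C_\epsilon) \geq c\, (r/D_G)^{n-1}\epsilon^n$, where $D_G := \textup{diam}(G)$. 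This furnishes the desired uniform bound (the constant depends only on the inradius and diameter of $G$), and chaining the inequalities produces $d_G(K, L) \geq c_G'\, d_H(K, L)^n$. The exponent $n$ is sharp precisely at sharp boundary points of $G$---vertices of polytopes---which is why the polytope case is the universal worst, as noted in the paper; for bodies with $C^2$ boundary the cap volume improves to $\epsilon^{(n+1)/2}$ via the positive curvature of $\partial G$, yielding the better exponents the abstract alludes to.
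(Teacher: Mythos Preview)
Your proof is correct and follows essentially the same route as the paper: the upper bound via Lipschitz continuity of mixed volumes (the paper telescopes one slot at a time, you expand $\MV_j(G,K'+\epsilon B)$ directly, but the underlying estimate is identical), and the lower bound via the reduction $2^n d_G \geq \rho_G = \V(\tilde M\setminus\tilde K)+\V(\tilde M\setminus\tilde L)$, followed by translating a cap $C_G(\nu,\epsilon)$ into $\tilde M\setminus\tilde K$ using the support point of $K$ in the direction of the farthest point of $L$, and finally bounding the cap volume below by a cone built on an inscribed ball. One small remark: your stated constant $c(r/D_G)^{n-1}$ in the cap estimate is not quite uniform in $\nu$ once $\epsilon$ exceeds the cone height $H=\langle g^*(\nu)-p_0,\nu\rangle$ (which can be as small as $r$); the paper absorbs this by also shrinking the height, obtaining $(r/D_G)^{2n-1}\epsilon^n$ instead, but this does not affect your argument since you only claim dependence on $r$ and $D_G$.
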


In the case $G = \Delta$, the estimates of \eqref{eq: bound for general reference body} imply that $(\mathcal K(\Delta),d_\Delta)$ is complete, because so is $(\mathcal K(\Delta),d_\mathcal H)$, due to the Blashke selection lemma. Remarkably, the analogous result in the K\"ahler case for general singularity types only holds under the assumption of non-vanishing mass, as pointed out in \cite[Theorem 4.9]{darvas2019metric} and the example in Section 4.2 of the same paper.

\begin{remark} \label{rem: quasi-trianle_ineq}Abusing precision, in this note we will refer to $d_{G}$ as a quasi-metric (satisfying only the quasi-triangle inequality). We will never assume that the quasi-triangle inequality for $d_G$ holds since this is only known in case $G = \De$. However in the context of the relative pluripotential theory described in \cite{darvas2023relative}, there is a natural avenue to obtain the quasi-triangle inequality for general $G$. Specifically, one can introduce a `relative metric' $d_{\mathcal{S}, \phi}$ on the space of relative singularity types $\mathcal{S}(X,\om, \phi) = \left\{[u] : u \preceq \phi, u \text{ is a model potential }\right\}$. It is natural to expect that
    \[
    d_{\mathcal{S},\phi}([u], [v]) \leq \sum_{j=0}^{n}2\int_{X} \om^{j}_{\phi}\wedge \om^{n-j}_{\max\{u,v\}} - \int_{X} \om^{j}_{\phi}\wedge \om^{n-j}_{u} - \int_{X} \om^{j}_{\phi}\wedge \om^{n-j}_{v} \leq d_{\cS, \phi}([u], [v]).
    \]
In the particular case when $\phi$ is equal to $\phi_{G}$, the potential induced by the convex body $G$, we would obtain that $d_{G}$ as described in~\eqref{eq: expression for quasi-metric for arbitrary convex body} is indeed a quasi-metric, just like $d_{\De}$. 
\end{remark}

When the reference convex body $G$ has $C^{2}$ boundary, the H\"older estimates of the previous result can be sharpened: 

\begin{theorem}\label{thm: improvement for smooth bodies}
    If $G \subset \R^{n}$ is a compact subset of $\R^{n}$ with non-empty interior and $C^2$ boundary, then there exists $C > 1$ depending only on $G$ such that 
    \begin{equation}\label{eq: bound for smooth body}
    \frac{1}{C}d_{\H} (K, L)^{\frac{n+1}{2}} \leq d_{G} (K,L) \leq C d_{\H}(K,L), \ \ K,L \in \mathcal K(G).    
    \end{equation}
    
\end{theorem}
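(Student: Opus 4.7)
The plan is to prove only the lower bound in \eqref{eq: bound for smooth body}, since the upper bound is inherited from Theorem~\ref{thm: Main theorem}. Set $\epsilon := d_{\H}(K,L)$ and $M := K\hull L$. After swapping $K$ and $L$ if necessary, I may assume there exists $p\in L$ with $d(p,K) \geq \epsilon/2$, so $d_{\H}(K,M) \geq \epsilon/2$. Since $L\subset M$, monotonicity of mixed volumes gives $\MV_j(G,M) \geq \MV_j(G,L)$ for every $j$; dropping those nonnegative terms from~\eqref{eq: expression for quasi-metric for arbitrary convex body} reduces the problem to
\[
d_G(K,L) \ \geq \ \sum_{j=0}^{n}\bigl(\MV_j(G,M) - \MV_j(G,K)\bigr).
\]

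Next I pass from mixed volumes to ordinary volume via Steiner's expansion $\V(G+X) = \sum_{j=0}^{n}\binom{n}{j}\MV_j(G,X)$. Since $K\subset M$ makes each summand $\MV_j(G,M)-\MV_j(G,K)$ nonnegative and $\binom{n}{j}\leq 2^n$, I obtain $\V(G+M) - \V(G+K) \leq 2^n d_G(K,L)$. Thus it suffices to show $\V(G+M) - \V(G+K) \geq c_G\,\epsilon^{(n+1)/2}$. The function $x\mapsto d(x,G+K)$ is convex on $G+M$ with supremum equal to $d_{\H}(G+K,G+M) = d_{\H}(K,M) \geq \epsilon/2$, so it is attained at some boundary point $p^*\in\partial(G+M)$. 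A hyperplane separating $G+K$ from $p^*$ yields a unit vector $u\in S^{n-1}$ for which the cap $C := (G+M)\cap\{x : \langle u,x\rangle \geq h_{G+M}(u) - \epsilon/2\}$ has interior disjoint from $G+K$, and hence $\V(G+M)-\V(G+K) \geq \V(C)$.

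The heart of the argument is the classical cap-volume estimate: for a convex body $A$ with $C^2$ boundary, any $v\in S^{n-1}$, and any $\delta > 0$, the cap $A\cap\{\langle v,\cdot\rangle \geq h_A(v) - \delta\}$ has volume at least $c_A\,\delta^{(n+1)/2}$, where $c_A$ depends on the uniform upper bound on the principal curvatures of $\partial A$. This is where the $C^2$ hypothesis enters decisively: at a vertex of a polytope the analogous cap has volume only $\sim \delta^n$, which explains why Theorem~\ref{thm: Main theorem} cannot in general be improved. I expect this cap-volume inequality to be the main technical obstacle; its proof reduces to writing $\partial A$ locally as the graph of a convex $C^2$ function with bounded Hessian and integrating the quadratic cross-sections. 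To apply it to the auxiliary body $G+M$, which need not itself be $C^2$, I would use a translation trick: if $m_u\in M$ realizes $h_M(u) = \langle u,m_u\rangle$, then $m_u + \bigl(\text{cap of }G\text{ of depth }\epsilon/2\text{ in direction }u\bigr) \subset C$, so $\V(C) \geq c_G(\epsilon/2)^{(n+1)/2}$. Chaining these estimates produces the desired bound $d_G(K,L) \geq c_G'\,\epsilon^{(n+1)/2}$.
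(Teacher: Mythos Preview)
Your proposal is correct and follows essentially the same route as the paper. Both arguments reduce the lower bound to finding a translate of a cap $C_G(u,\delta)$ of $G$ sitting inside $(G+K\hull L)\setminus(G+K)$, and then to a uniform lower bound $\V(C_G(u,\delta))\geq c_G\,\delta^{(n+1)/2}$ valid for all directions $u$.

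The differences are cosmetic. The paper obtains the separating direction $u$ and the full depth $d_\H(K,L)$ directly from the nearest-point pair $p\in K$, $q\in L$ (its Lemma~3.2/3.4), whereas you pass through the farthest point of $G+M$ from $G+K$ and recover a cap of $G$ via your translation trick $m_u+C_G(u,\epsilon/2)\subset C$; this costs you only a harmless factor of $2$ in the depth. For the cap-volume estimate itself, the paper avoids writing $\partial G$ as a graph by using the equivalent ``ball of radius $r$ rolls freely in $G$'' characterization of $C^2$ boundary: then $C_G(u,\delta)$ contains a spherical cap $C_{rB}(u,\delta)$, whose volume is computed explicitly to yield the $\delta^{(n+1)/2}$ scaling. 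This is slightly cleaner than the local-graph argument you sketch, but the content is the same.
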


Lastly, we argue that the H\"older estimates in the previous theorems are in a certain sense optimal:

\begin{theorem}\label{thm: bounds are optimal}Let  $G \subset \R^{n}$ be a compact subset of $\R^{n}$ with non-empty interior. For any $\bb > 1$, there is no $C > 0$ such that 
    \[
    d_{G}(K,L) \leq C d_{\H}(K,L)^{\bb}, \ \  K,L \in \mathcal K(G).
    \]
For any $p < \frac{n+1}{2}$ there is no $C$ such that 
    \[
    Cd_{\H}(K,L)^{p} \leq d_{G}(K,L)\ \  K,L \in \mathcal K(G).
    \]
Lastly, if $G$ is a convex polytope, then for any $p < n$, there is no $C> 0$ such that 
    \[
    C d_{\H}(K,L)^{p}  \leq d_{G}(K,L)\ \  K,L \in \mathcal K(G).
    \]

\end{theorem}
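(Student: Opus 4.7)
The overall strategy is to exhibit, for each of the three statements, an explicit one-parameter family $K_\epsilon, L_\epsilon \in \mathcal K(G)$ for which $d_G$ and $d_\mathcal H$ can be computed or tightly estimated as $\epsilon \to 0$, and then read off the claimed blow-up of $d_G/d_\mathcal H^\beta$ or vanishing of $d_G/d_\mathcal H^p$. Claim (1) is immediate from pure scaling: fix $p \in \mathrm{int}(G)$, take $L = G$ and $K_\epsilon = p + (1-\epsilon)(G - p) \in \mathcal K(G)$. Then $K_\epsilon \subset G$, so $K_\epsilon \hull L = G$, and the $j$-homogeneity and translation invariance of mixed volumes give $\MV_j(G, K_\epsilon) = (1-\epsilon)^j \V(G)$. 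A direct expansion then yields
\[
d_G(K_\epsilon, L) = \V(G)\sum_{j=1}^n \bigl[1 - (1-\epsilon)^j\bigr] = \binom{n+1}{2}\V(G)\,\epsilon + O(\epsilon^2),
\]
while $d_\mathcal H(K_\epsilon, L) = \Theta(\epsilon)$, so $d_G/d_\mathcal H^\beta = \Theta(\epsilon^{1-\beta})$ blows up for every $\beta > 1$.

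For claim (3), fix a vertex $v$ of the polytope $G$ and a unit vector $w$ with $-w$ in the interior of the normal cone of $G$ at $v$, and take $K = G$, $L_\epsilon = G \cap \{x : \langle x - v, w\rangle \geq \epsilon\}$. For $\epsilon$ below an explicit threshold, the removed cap lies inside the tangent cone of $G$ at $v$ and is a polytope of volume $c\epsilon^n$ for some $c = c(G,v,w) > 0$. The decisive step is the monotonicity of mixed volumes: since $L_\epsilon \subseteq G$,
\[
\V(L_\epsilon) \leq \MV(G[n-j], L_\epsilon[j]) = \MV_j(G, L_\epsilon) \leq \V(G),
\]
so $\V(G) - \MV_j(G, L_\epsilon) \leq c\epsilon^n$ for every $j$, and summing gives $d_G(K, L_\epsilon) \leq nc\epsilon^n$. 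Combined with $d_\mathcal H(K, L_\epsilon) = \epsilon$, this yields $d_G/d_\mathcal H^p = O(\epsilon^{n-p}) \to 0$ for every $p < n$.

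For claim (2) I would use an interior ball cap: pick a ball $B = B_r(p) \subset G$ (possible since $G$ has nonempty interior), a generic unit vector $w$, and set $K = B$, $L_\epsilon = B \cap \{\langle x - p, w\rangle \leq r - \epsilon\}$. Then $d_\mathcal H(K, L_\epsilon) = \epsilon$ and $\V(K) - \V(L_\epsilon) = \Theta(r^{(n-1)/2}\epsilon^{(n+1)/2})$ by the spherical cap formula. The monotonicity shortcut of (3) is unavailable since $K \neq G$, so each mixed-volume difference must be estimated instead via the Aleksandrov-type telescoping
\[
\MV_j(G, K) - \MV_j(G, L_\epsilon) = \frac{1}{n}\sum_{i=0}^{j-1}\int_{S^{n-1}}(h_K - h_{L_\epsilon})\, dS(G[n-j], K^{[j-1-i]}, L_\epsilon^{[i]}),
\]
noting that $h_K - h_{L_\epsilon}$ is supported on a spherical cap of angular radius $O(\sqrt{\epsilon/r})$ around $w$ with sup norm $\epsilon$. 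Each integral is then at most $\epsilon$ times the mass of the mixed surface area measure on this spherical cap, and the plan is to show the latter mass is $O(\epsilon^{(n-1)/2})$, giving $O(\epsilon^{(n+1)/2})$ per integral. When a ball factor $K$ is present in $dS$ (i.e.\ $i \leq j-2$), the positive Gaussian curvature of $K$ forces the mixed surface area measure to be absolutely continuous with bounded density, so the cap mass estimate is immediate. For $i = j-1$ the measure can have atoms: one at the cap-face normal $w$ of mass $\Theta(\epsilon^{(n-1)/2})$, which contributes exactly $\Theta(\epsilon^{(n+1)/2})$, together with atoms inherited from the singular part of $S_G$ at facet-like directions of $G$, which one avoids by choosing $w$ at a Lebesgue density point of $S_G$ lying outside its singular support. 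Summing over all $j, i$ then gives $d_G(K, L_\epsilon) \leq C\epsilon^{(n+1)/2}$, whence $d_G/d_\mathcal H^p \to 0$ for any $p < (n+1)/2$.

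The main obstacle is the mixed-surface-measure estimate in (2): the monotonicity sandwich that trivialises (3) is unavailable when $K \neq G$, and one genuinely needs both the smoothing effect of the ball factor and a careful (generic) choice of $w$ to match the angular concentration of $h_K - h_{L_\epsilon}$ with the decay of the relevant surface measures near $w$. The handling of the singular term $i = j-1$, where no ball factor is present, is where this choice of $w$ is indispensable.
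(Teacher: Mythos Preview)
Your arguments for (1) and (3) are correct. For (1) you use essentially the same scaling example as the paper. For (3) your approach differs from the paper's: the paper compares a cap $C_t$ of $G$ near a vertex with its base $S_t$ and uses the set-theoretic containment $(G+C_t)\setminus(G+S_t)\subset 2C_t$, whereas you compare $G$ itself with $G$ minus a vertex cap and invoke the monotonicity sandwich $\V(L_\epsilon)\le\MV_j(G,L_\epsilon)\le\V(G)$. Your route is arguably cleaner; one small correction is that $d_\H(G,L_\epsilon)$ need not equal $\epsilon$ exactly (the foot of the perpendicular from $v$ to the cutting hyperplane may fall outside $G$ when $w$ is not in the tangent cone at $v$), but it is always $\Theta(\epsilon)$, which is all you need.

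For (2), however, there is a genuine gap. Your key claim --- that the presence of a ball factor $K$ in $S(G[n-j],K[j-1-i],L_\epsilon[i],\cdot)$ forces absolute continuity with bounded density --- is false. Already for $n=3$, $G$ a polytope and $K=B$, the mixed area measure $S(G,B,\cdot)$ is concentrated on the great-circle arcs joining adjacent facet normals of $G$ (coming from the cylindrical pieces of $\partial(G+tB)$), hence is singular. One might try to salvage the argument by choosing $w$ generically so that small caps around $w$ miss all such lower-dimensional supports, but this must be arranged simultaneously for every measure in the telescoping sum, including those that contain factors of $L_\epsilon$ (which itself depends on $w$); your sketch does not control this.

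More importantly, the detour through surface area measures is unnecessary. The paper avoids it entirely: after translating so that $0\in\mathrm{int}(G)$, it picks the point $x\in G$ of maximal norm, sets $u=x/|x|$, and compares the cap $C_t=C_G(u,t)$ with its base hyperplane slice $S_t$. Since $G\subset |x|B$ and both bodies share the supporting hyperplane at $x$, one has $C_t\subset C_{|x|B}(u,t)$, whose volume is $O(t^{(n+1)/2})$ by the spherical cap formula; combined with the elementary inclusion $(G+C_t)\setminus(G+S_t)\subset 2C_t$ this yields $d_G(C_t,S_t)=O(t^{(n+1)/2})$ while $d_\H(C_t,S_t)\ge t$. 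In fact your own monotonicity sandwich from (3), applied verbatim to $K=G$ and $L_t=G\setminus C_t$ with this same choice of $u$, gives $d_G(G,L_t)\le n\,\V(C_t)=O(t^{(n+1)/2})$ and $d_\H(G,L_t)\ge t$ with even less effort --- so the obstacle you identify is an artifact of having taken $K$ to be an inscribed ball rather than $G$ itself.
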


With regards to future directions, it would be intriguing to prove that $d_G$ satisfies a quasi-triangle inequality using only convex analysis, without any reference to complex geometry (see Remark \ref{rem: quasi-trianle_ineq}). We expect that our analysis carries through for other types of toric K\"ahler manifolds as well, not just complex projective space. Still, there is interest in seeing if any difficulties arise, and we leave these to the interested reader.

\paragraph{Organization.} In Section~\ref{sec: Background}, we explain our notations and recall relevant results from the literature. In Section~\ref{sec: holder estimates} we prove Theorems~\ref{thm: Main theorem} and~\ref{thm: improvement for smooth bodies}. In Section~\ref{sec: holder exponent optimality}, we prove Theorem~\ref{thm: bounds are optimal}. 

\paragraph{Acknowledgements.} This paper is a result of an REU project carried out in the summer of 2024. T. Darvas was the faculty mentor for the project, assisted by two graduate TAs, P. Gupta  and V. Pidaparthy. A. Aitokhuehi, B. Braiman, D. Cutler, R. Deaton, J. Horsley and J. Tang were the participating students. We acknowledge support from the NSF through the grants DMS 1846942, 2149913, 2405274.

\section{Background}\label{sec: Background} 
In this section, we introduce notation and gather some relevant definitions and theorems. 
\begin{notation}
We first recall some traditional notation. For $A, C \subset \R^n$ we say 
\begin{enumerate}[label=\arabic*.]
    \item $\mathrm{int}(A)$ is the interior of $A$ in the Euclidean topology,
    \item $\mathrm{diam}(A) :=$ $\sup \{|x-y| : x,y \in A\}$,
    \item $A\,\hull\,C$ is the closed convex hull of $A \cup C$, and
    \item $\V$ is the Lebesgue measure on $\R^n$.
\end{enumerate}

We also consider some more specific notation. For two points $p, q \in \R^n$, let $$[p,q] := \{p\} \,\hull\, \{q\} \quad \text{and} \quad [p,q) := [p,q]\setminus q.$$

By $\mathcal K^n$ we mean the space of compact, convex sets in $\R^n$. If $G \in \K^n$ has nonempty interior, we call it a \textit{convex body}. Given convex body $G$, we define $\K(G)$ to be the space of compact, convex subsets of $G$. Furthermore, $B$ is the closed unit ball centered at the origin and $\omega_k$ is the volume of the unit $k$-ball in $\R^k$. 
\end{notation}
We recall the definition of the classical Hausdorff distance.
\begin{definition}\label{def: hausdorff-dist}
    Let $K, L$ be two compact subsets of $\R^n$. The Hausdorff distance between them is 
    \[
    d_\H(K,L)=\inf\{\delta > 0 : K \subset L + \delta B, \; L \subset K + \delta B\},
    \]
    or equivalently $$d_\H(K,L) = \max \left\{\sup_{x \in K} d(x, L), \sup_{y \in L} d(y, K)\right\},$$ where $d(x,L) := \min\{|x - y| : y \in L\}.$
\end{definition}

By the Blaschke selection theorem, the Hausdorff distance turns $\mathcal K^n$ into a complete metric space with the Heine-Borel property (\cite{schneider2014}, Theorem 1.8.7). We next recall the definition of the support function of a convex set.
\begin{definition} \label{def: support-function}
    Given $K \in \mathcal K^n$, its support function $h_K : \R^n \to \R$ is 
    $$h_K(x) = \max\{u \cdot x : u \in K\}.$$
\end{definition}

Here, $u\cdot x$ denotes the Euclidean inner product between $u$ and $x$. The support $h_K$ is continuous and uniquely describes $K$. For other elementary properties of the support function, we refer the reader to Chapter 2.3 of \cite{hug2020lectures}. Additionally, we recall the definition of the mixed volume.

\begin{definition}[\cite{schneider2014}, Theorem 5.1.7]\label{mixed_vol_def}
   There exists some non-negative symmetric function $\MV: (\K^n)^n \to \R$ such that for any $K_1,\dots,K_n \in \mathcal K^n$ and $t_1,\dots,t_n \geq 0$, 
\begin{align}\label{eq: minkow-sum-poly-mv}
    \V(t_1 K_1 + \cdots + t_n K_n) = \sum_{i_1,\dots,i_n = 1}^n t_{i_1} \cdots t_{i_n} \MV(K_{i_1}, \dots, K_{i_n}).
\end{align}
The real number $\MV(K_1,\dots,K_n)$ is called the mixed volume of $K_1,\dots,K_n$.
\end{definition}

We use the following known properties of mixed volumes.

\begin{theorem}[\cite{hug2020lectures} Theorem 3.9] \label{thm: mv-properties}
    Let $K_1,\dots,K_n \in \mathcal K^n$. Then:
    \begin{enumerate}
        \item $\MV(K_1,\dots,K_n) \geq 0.$
        \item If $Q_1,\dots,Q_n \in \mathcal K^n$ are such that $K_j \subseteq Q_j$ for each $j$, then 
        $$\MV(K_1,\dots,K_n) \leq \MV(Q_1,\dots,Q_n).$$
        \item For any permutation $\sigma$ of $\{1,\dots,n\}$, 
        $$\MV(K_1,\dots,K_n) = \MV(K_{\sigma(1)}, \dots, K_{\sigma(n)}).$$
        
        \item If $j \in \{1,\dots,n\}$ and $Q_j \in \mathcal K^n$, then \begin{align*}
            \MV(K_1,\dots, K_j + Q_j,& \dots, K_n)
            = \MV(K_1,\dots, K_n) + \MV(K_1,\dots,Q_j,\dots,K_n).
        \end{align*}
        \item For any $t \geq 0$, 
        $$\MV(K_1,\dots,tK_j,\dots,K_n) =  \MV(K_1,\dots,K_n)t.$$
        \item For $x_1, \dots, x_n \in \R^n$, $$\MV(K_1+x_1,\dots, K_n+x_n) = \MV(K_1,\dots, K_n).$$
    \end{enumerate}
\end{theorem}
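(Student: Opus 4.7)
The plan is to produce, for each of the three inequalities, an explicit one-parameter family $(K_\epsilon, L_\epsilon)$ in $\mathcal K(G)$ saturating the asserted H\"older exponent.

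\emph{Upper bound sharpness.} Fix $x_0 \in \mathrm{int}(G)$ and a unit vector $e$. For small $t > 0$ set $K_t = \{x_0\}$, $L_t = \{x_0 + te\}$, so $d_\H(K_t, L_t) = t$ and $K_t\,\hull\,L_t = [x_0, x_0+te]$. Direct computation via Cavalieri, $\V(\lambda G + \mu[0, te]) = \lambda^n \V(G) + \mu t \lambda^{n-1}|\pi_{e^\perp}(G)|$, forces $\MV_j(G, K_t\,\hull\,L_t) = 0$ for $j \neq 1$ and $\MV_1(G, K_t\,\hull\,L_t) = (t/n)|\pi_{e^\perp}(G)| > 0$. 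Since also $\MV_j(G, K_t) = \MV_j(G, L_t) = 0$ for $j \ge 1$, we obtain $d_G(K_t, L_t) = C_e\,t$ with $C_e > 0$, so $d_G(K_t, L_t)/d_\H(K_t, L_t)^\beta = C_e\,t^{1-\beta} \to \infty$ as $t \to 0^+$ for any $\beta > 1$.

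\emph{General lower bound sharpness.} Fix a closed ball $\bar B(x_0, r) \subset \mathrm{int}(G)$ and a unit vector $u$ such that the surface area measure $S_G$ has locally bounded density in a neighborhood of $u$ on $S^{n-1}$ (available for polytope $G$ by choosing $u$ away from facet normals, and in general by Alexandrov's a.e.\ $C^2$ regularity of $\partial G$). Set $L_\epsilon = \bar B(x_0, r)$ and $K_\epsilon = L_\epsilon \cap \{(x - x_0) \cdot u \le r - \epsilon\}$; then $d_\H(K_\epsilon, L_\epsilon) = \epsilon$ and $K_\epsilon\,\hull\,L_\epsilon = L_\epsilon$. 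An explicit support-function computation on the ball shows $h_{L_\epsilon}-h_{K_\epsilon}$ is bounded by $\epsilon$ and supported on a spherical cap of area $O(\epsilon^{(n-1)/2})$ around $u$. Telescoping each $\MV_j(G, L_\epsilon) - \MV_j(G, K_\epsilon)$ through the integral formula $n\MV(K_1,\dots,K_n) = \int h_{K_n}\, dS_{K_1,\dots,K_{n-1}}$ yields finitely many terms of the form $\tfrac{1}{n}\int(h_{L_\epsilon} - h_{K_\epsilon})\,dS$, each bounded by $\epsilon \cdot O(\epsilon^{(n-1)/2}) = O(\epsilon^{(n+1)/2})$: for $j = 1$ the measure is $S_G$, locally bounded near $u$ by construction, and for $j \ge 2$ the mixed area measure contains the smooth factor $L_\epsilon$ or $K_\epsilon$, giving bounded local density. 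The top term is the spherical cap volume $\V(L_\epsilon) - \V(K_\epsilon) = \Theta(\epsilon^{(n+1)/2})$. Therefore $d_G(K_\epsilon, L_\epsilon) = O(\epsilon^{(n+1)/2})$ and $d_G/d_\H^p \to 0$ for $p < (n+1)/2$.

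\emph{Polytope sharpness.} Suppose $G$ is a polytope; pick a vertex $v$ and a vector $u$ in the relative interior of the outer normal cone of $G$ at $v$. Put $K_\epsilon = G$, $L_\epsilon = G \cap \{u \cdot x \le u \cdot v - \epsilon\}$. For small $\epsilon$, $C_\epsilon := G \setminus L_\epsilon$ is a cone with apex $v$, hence $\V(C_\epsilon) = c_v \epsilon^n$ for some $c_v > 0$, and $d_\H(K_\epsilon, L_\epsilon) = \epsilon/|u|$. Support-function comparison establishes the identity
\[
(1-t)K_\epsilon + t L_\epsilon \;=\; L_{t\epsilon} \;:=\; G \cap \{x : u \cdot x \le u \cdot v - t\epsilon\}, \quad t \in [0,1],
\]
using that outside the normal cone of $v$ the cut is inactive for small $s$, while inside the normal cone the local cone structure at $v$ makes $h_{L_s}(w) - h_G(w) = s \cdot M_w$ linear in $s$. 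Since $C_{t\epsilon}$ is the $t$-dilate of $C_\epsilon$, $\V((1-t)K_\epsilon + tL_\epsilon) = \V(G) - c_v t^n \epsilon^n$. Matching this polynomial in $t$ to the Bernstein-basis expansion $\V((1-t)K_\epsilon + tL_\epsilon) = \sum_{j=0}^n \binom{n}{j}(1-t)^{n-j} t^j \MV_j(G, L_\epsilon)$ forces $\MV_j(G, L_\epsilon) = \V(G)$ for $0 \le j \le n-1$ and $\MV_n(G, L_\epsilon) = \V(L_\epsilon)$. Consequently $d_G(K_\epsilon, L_\epsilon) = \V(G) - \V(L_\epsilon) = c_v \epsilon^n$ and $d_G/d_\H^p = O(\epsilon^{n-p}) \to 0$ for $p < n$.

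The main obstacle will be the second part's density estimate, requiring a careful choice of cap direction and propagation of smoothness through the various mixed surface-area measures that appear during telescoping.
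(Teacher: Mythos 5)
Your proposal does not address the statement it was assigned. The statement in question is Theorem~\ref{thm: mv-properties}, which lists standard properties of mixed volumes (nonnegativity, monotonicity, symmetry, multilinearity, homogeneity, translation invariance). In the paper this result is simply quoted from Hug--Weil \cite{hug2020lectures} (Theorem~3.9) and used as a black box; no proof is given or intended. A proof of it would proceed from the polynomial expansion of $\V(t_1K_1+\cdots+t_nK_n)$ in Definition~\ref{mixed_vol_def}, deriving symmetry and multilinearity by comparing coefficients, monotonicity and nonnegativity via the valuation/approximation theory of mixed volumes, etc. Nothing of this kind appears in your write-up.

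What you have written instead is a proof sketch for Theorem~\ref{thm: bounds are optimal} (the H\"older exponent optimality result, treated in Section~\ref{sec: holder exponent optimality} via Theorems~\ref{thm: lipschitz-optimality}, \ref{thm: polytope-optimality}, and \ref{thm: n+1/2 optimal}). If that were the assigned statement I would note the following. Your first family (a point vs.\ a translated point) is a valid and slightly more elementary alternative to the paper's dilation pair $(G,rG)$. Your polytope argument, resting on the identity $(1-t)G + tL_\epsilon = L_{t\epsilon}$ near a vertex, is an elegant route that differs from the paper's use of Lemma~\ref{lemm: cap-counterexample}, though the identity requires more care than a one-line assertion. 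Your general lower-bound argument, however, relies on local density bounds for mixed surface-area measures obtained from ``Alexandrov's a.e.\ $C^2$ regularity,'' which does not yield uniform local bounds for arbitrary convex $G$; the paper avoids this entirely by choosing $u$ in the direction of a farthest point so that $G\subset rB$, making the cap estimate a pure volume comparison via Lemma~\ref{lemma: cap-vol-lemma}. In any case, you should resubmit a proof (or an explicit acknowledgment that it is a cited result) for the theorem you were actually asked about.
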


We will make use of the following alternative notations for mixed volume. If $m \geq 1$ and $K_1,\dots,K_m \in \mathcal K^n$, we will write
$$\MV(K_1[\alpha_1], \dots, K_m[\alpha_m]) := \MV(\underbrace{K_1,\dots,K_1}_{\alpha_1\textup{ times}}, \dots, \underbrace{K_m,\dots,K_m}_{\alpha_m\textup{ times}}),$$
for non-negative integers $\alpha_1,\dots,\alpha_m$ with $\alpha_1 + \cdots + \alpha_m = n$. As mentioned in Section ~\ref{sec: Introduction}, if $K,L \in \mathcal K^n$, we will simply write
\begin{equation}\label{eq: mixed_volume}
\MV_j(K,L) := \MV(K[n - j], L[j]).
\end{equation}
We recall and formally define $d_G$ as presented in Theorem \ref{thm: Main theorem}.

\begin{definition}\label{def: quasi-metric-defn} For a convex body $G \subset \R^n$, let $d_G: \mathcal{K}(G) \times \mathcal{K}(G) \to \R$ be the quasi-metric
    \begin{equation}\label{eqn: dg-def}
    d_G(K,L) := \sum_{j=1}^n\left(2\MV_j(G, K\hull L)-\MV_j(G,K)-\MV_j(G,L)\right).
    \end{equation}
\end{definition}

We define an alternative, related quasi-metric on $\mathcal K(G)$. 
\begin{definition} \label{def: rhoG-deltaG} Letting $G$, $K$, and $L$ be as in Definition \ref{def: quasi-metric-defn}, let 
    \begin{equation}\label{eqn: rhoG-def}
        \rho_G(K,L) := 2\V(G+K\hull L)-\V(G+K)-\V(G+L)
    \end{equation}
\end{definition}

That $\rho_G$ is a quasi-metric follows from the fact that $\rho_G$ and $d_G$ are quasi-isometric. More specifically, 
\begin{equation}\label{eqn: rhoG-dG-qisometry}
    d_G(K,L) \leq \rho_G(K,L) \leq \binom{n}{\lfloor n/2 \rfloor} d_G(K,L).
\end{equation}
Indeed, by Equation \eqref{eq: minkow-sum-poly-mv}, it follows \begin{align*}
    \rho_G(K,L) &= \sum_{j=1}^n\binom{n}{j}\left(2\MV_j(G,K\hull L)-\MV_j(G,K)-\MV_j(G,L)\right)\\
    &\geq \sum_{j=1}^n 2\MV_j(G,K\hull L)-\MV_j(G,K)-\MV_j(G,L)\\
    &=d_G(K,L).
\end{align*}
Similarly, \begin{align*}
    \rho_G(K,L) &= \sum_{j=1}^n \binom{n}{j} \left(2\MV_j(G,K\hull L)-\MV_j(G,K)-\MV_j(G,L)\right)\\
    &\leq \binom{n}{\lfloor n/2\rfloor} \sum_{j=1}^n 2\MV_j(G,K\hull L)-\MV_j(G,K)-\MV_j(G,L)\\
    &= \binom{n}{\lfloor n/2\rfloor}d_G(K,L),
\end{align*}
which yields Equation \ref{eqn: rhoG-dG-qisometry}.

\section{H\"older Estimates for General Bodies and Smooth Bodies} \label{sec: holder estimates}

In this section, we prove \ref{thm: Main theorem}. To do this, we prove Theorem \ref{thm: lipschitz-upper} and Theorem \ref{thm: general-holder-lower}, which, when combined, gives Theorem \ref{thm: Main theorem}. The addition of Theorem \ref{thm: c2-holder-lower} gives the refinement of Theorem \ref{thm: improvement for smooth bodies}.

\begin{theorem}\label{thm: lipschitz-upper}
    There exists $C > 0$ such that \begin{equation}\label{eqn: Lipschitz-Upper}
        d_G(K,L) \leq C \cdot d_\H(K,L)
    \end{equation}
    for all $K,L \in \mathcal K(G)$. 
\end{theorem}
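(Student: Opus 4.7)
Set $\delta := d_\H(K,L)$. The starting observation is geometric: by the definition of the Hausdorff distance, $K \subseteq L + \delta B$ and $L \subseteq K + \delta B$. Since $K + \delta B$ is convex and contains both $K$ and $L$, one has
\[
K \,\hull\, L \subseteq K + \delta B, \qquad K \,\hull\, L \subseteq L + \delta B.
\]
This is the only input I need from the Hausdorff hypothesis; everything else is a routine application of the multilinearity and monotonicity of mixed volumes recorded in Theorem \ref{thm: mv-properties}.

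I would then split each summand in the definition of $d_G$ symmetrically,
\[
2\MV_j(G,K\,\hull\,L) - \MV_j(G,K) - \MV_j(G,L) = \bigl[\MV_j(G, K\,\hull\,L) - \MV_j(G,K)\bigr] + \bigl[\MV_j(G, K\,\hull\,L) - \MV_j(G,L)\bigr],
\]
and bound each bracket by monotonicity via
\[
\MV_j(G, K\,\hull\,L) - \MV_j(G,K) \leq \MV_j(G, K + \delta B) - \MV_j(G, K),
\]
and similarly with $L$ in place of $K$.

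Expanding $\MV_j(G, K + \delta B) = \MV(G[n-j], (K + \delta B)[j])$ by multilinearity,
\[
\MV_j(G, K+\delta B) = \sum_{k=0}^{j} \binom{j}{k} \MV(G[n-j], K[j-k], B[k]) \, \delta^{k},
\]
the $k = 0$ term cancels with $\MV_j(G, K)$, leaving only terms with $k \geq 1$. Because $K \subseteq G$, monotonicity bounds each remaining mixed volume by $\MV(G[n-k], B[k])$, a quantity depending only on $G$. Since $K, L \subseteq G$ we may further assume $\delta \leq \mathrm{diam}(G)$, so $\delta^{k} \leq \mathrm{diam}(G)^{k-1} \delta$ for $k \geq 1$. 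Plugging these bounds in produces a constant $C_G$ (depending on $n$ and $G$) such that $\MV_j(G, K + \delta B) - \MV_j(G, K) \leq C_G \delta$ for every $j \in \{1, \dots, n\}$. Summing over $j$ and over the two symmetric contributions yields the claim.

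There is no real obstacle here; the key conceptual point is simply that the convexity of $K + \delta B$ allows us to replace $K \,\hull\, L$ by a Minkowski sum, at which point the problem reduces to a Steiner-type first-order expansion. One could equivalently argue via $\rho_G$ using \eqref{eqn: rhoG-dG-qisometry} and the scalar Steiner formula $\V(G + K + \delta B) - \V(G + K) \leq C_G \delta$ (obtained from the same multilinear expansion applied to $A = G + K \subseteq 2G$), but working directly with the mixed volume decomposition avoids an extra combinatorial constant and mirrors the structure of $d_G$ term-by-term.
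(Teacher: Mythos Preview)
Your argument is correct. Both your proof and the paper's rest on the same containment $K\,\hull\,L \subseteq K + \delta B$ together with the monotonicity and multilinearity of mixed volumes, so the approaches are close cousins rather than genuinely different. The execution, however, differs: the paper replaces one slot at a time via a telescoping sum
\[
\MV_j(G, K\,\hull\,L) - \MV_j(G,K) = \sum_{\ell=0}^{j-1}\bigl(\MV(G[n-j], K\,\hull\,L[\ell+1], K[j-\ell-1]) - \MV(G[n-j], K\,\hull\,L[\ell], K[j-\ell])\bigr),
\]
and bounds each difference using the one-variable Lipschitz estimate $|\MV(\cdot,\ldots,\cdot,U) - \MV(\cdot,\ldots,\cdot,V)| \leq \MV(\cdot,\ldots,\cdot,B)\,d_\H(U,V)$ (Lemma 4.1 of \cite{hug2020lectures}). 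This keeps everything first order in $\delta$ and yields the explicit constant $C = n(n+1)\MV_1(G,B)$. Your route expands all $j$ copies of $K+\delta B$ at once, producing cross terms of every order $\delta^k$, $1 \leq k \leq j$, which you then absorb via $\delta^k \leq \mathrm{diam}(G)^{k-1}\delta$. This is perfectly valid but gives a larger constant involving $\sum_k \binom{j}{k}\MV(G[n-k],B[k])\,\mathrm{diam}(G)^{k-1}$. In short: same idea, slightly coarser bookkeeping; the paper's telescoping avoids the higher-order terms altogether.
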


\begin{proof}
    Using the linearity and monotonicity of mixed volume (Theorem \ref{thm: mv-properties}(2,4,5)) with the definition of Hausdorff distance (\ref{def: hausdorff-dist}) shows that for sets $U,V,K_1,\dots,K_{n-1} \in \mathcal K(G)$,
    $$|\MV(K_1,\dots,K_{n-1}, U) - \MV(K_1,\dots,K_{n-1},V)| \leq \MV(K_1,\dots,K_{n-1},B)d_\H(U,V).$$
    (See \cite{hug2020lectures}, Lemma 4.1.)
    Since $K_1,\dots,K_{n-1} \subset G$, using the notation of \eqref{eq: mixed_volume} we get that
    $$|\MV(K_1,\dots,K_{n-1}, U) - \MV(K_1,\dots,K_{n-1},V)| \leq \MV_1(G,B) d_\H(U,V).$$
    Therefore, since $K,L, K\hull L \subset G$, we have
    \begin{align*}
        &\MV_j(G, K\hull L) - \MV_j(G, K) \\
        &=\sum_{\ell=0}^{j-1} \MV(G[n-j], K\hull L[\ell + 1], K[j - \ell - 1]) - \MV(G[n-j], K\hull L[\ell], K[j - \ell])\\
        &\leq \sum_{\ell=0}^{j-1} \MV_1(G, B)d_\H(K\hull L, K)\\
        &\leq j\MV_1(G, B)d_\H(K,L).
    \end{align*}
   The final inequality comes from the fact $d_\H(K\hull L, K) \leq d_\H(K,L)$. The same inequalities are true if the roles of $K$ and $L$ are switched, and so 
    \begin{align*}
        d_G(K,L) &= \sum_{j=1}^n\left[2MV_j(G, K\hull L)-MV_j(G,K)-MV_j(G,L)\right]\\
        &\leq \sum_{j=1}^n 2j\MV_1(G,B)d_\H(K,L)\\
        &= n(n+1)\MV_1(G,B)d_\H(K,L).
    \end{align*}
    This proves Theorem \ref{eqn: Lipschitz-Upper} with $C = n(n+1)\MV_1(G,B)$.
\end{proof}

We next lower bound $d_G$ by $d_\H^n$.

\begin{theorem}\label{thm: general-holder-lower}
    There exists $C > 0$ such that 
    $$C \cdot d_\H(K,L)^n \leq d_G(K,L)$$
    for all $K,L \in \mathcal K(G)$.
\end{theorem}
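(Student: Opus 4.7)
The plan is to lower bound $d_G(K,L)$ by $c(G)\,d_\H(K,L)^n$ via four main steps: reduction to the quasi-isometric $\rho_G$; selection of a Hausdorff ``witness'' point; a translation trick converting the problem to a set-difference volume inside $G$; and finally, a slab volume lower bound via an inscribed ball.

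Set $\delta := d_\H(K,L)$; we may assume $\delta > 0$. By swapping $K$ and $L$ if necessary, there exists $x_0 \in K$ with $d(x_0, L) = \delta$. By \eqref{eqn: rhoG-dG-qisometry} it suffices to bound $\rho_G(K,L)$ from below. Writing $M := K \hull L$ and using $G+K,\,G+L \subseteq G+M$,
\[
\rho_G(K,L) = \V\bigl((G+M)\setminus(G+K)\bigr) + \V\bigl((G+M)\setminus(G+L)\bigr) \geq \V\bigl((G+x_0)\setminus(G+L)\bigr),
\]
since $G+x_0 \subseteq G+K \subseteq G+M$. Translating by $-x_0$ preserves volume and yields
\[
\V\bigl((G+x_0)\setminus(G+L)\bigr) = \V\bigl(G \setminus (G+L_0)\bigr), \qquad L_0 := L - x_0,
\]
where $L_0$ is compact convex with $d(0, L_0) = \delta$.

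Next I separate $L_0$ from the origin. Let $y^* \in L_0$ be the closest point to $0$ and set $u := y^*/|y^*|$; convexity of $L_0$ gives $\langle y, u\rangle \geq \delta$ for all $y \in L_0$. Let $g^- \in G$ minimize $\langle \cdot, u\rangle$ on $G$ and introduce the slab
\[
S := \{g \in G : \langle g - g^-, u\rangle < \delta\}.
\]
For $g \in S$ and $y \in L_0$, $\langle g - y, u\rangle < \langle g^-, u\rangle$, so $g - y \notin G$, i.e., $g \notin G+y$. Hence $S \subseteq G \setminus (G+L_0)$, reducing everything to a lower bound on $\V(S)$.

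The main obstacle is the geometric estimate $\V(S) \geq c(G)\,\delta^n$, which must hold uniformly in the direction $u$. The approach is to fix an inscribed ball $B(c, r) \subseteq G$ and form the convex cone $T := \mathrm{conv}(\{g^-\} \cup B(c, r)) \subseteq G$, setting $H := \langle c - g^-, u\rangle$ (so $H \geq r$ by convexity). For each $h \in [0, H]$, the scaled ball $g^- + (h/H)(B(c,r) - g^-) \subseteq T$ is a Euclidean $n$-ball of radius $(h/H)r$ centered on the height-$h$ hyperplane, so its slice at height $h$ is an $(n-1)$-disk of radius $(h/H)r$. Integrating $\omega_{n-1}((h/H)r)^{n-1}$ over $h \in [0, \min(\delta, H)]$ gives
\[
\V(S) \geq \omega_{n-1}(r/H)^{n-1} \min(\delta,H)^n / n \geq c(G)\,\delta^n
\]
for $\delta \leq r$, using $H \leq \mathrm{diam}(G)$; the regime $\delta \in (r, \mathrm{diam}(G)]$ follows by monotonicity of $\delta \mapsto \V(S)$ and boundedness of $\delta^n$. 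Chaining all estimates produces $d_G(K,L) \geq c'(G)\,d_\H(K,L)^n$.
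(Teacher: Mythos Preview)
Your argument is correct and follows essentially the same route as the paper: both reduce to $\rho_G$, locate a cap of $G$ of height $\delta$ inside one of the set differences $(G+K\hull L)\setminus(G+K)$ or $(G+K\hull L)\setminus(G+L)$ (your slab $S$ is precisely the open cap $C_G(-u,\delta)$), and then lower bound the cap volume via a cone built over an inscribed ball of $G$. The only cosmetic difference is that you bypass the explicit use of $K\hull L$ by working directly with $(G+x_0)\setminus(G+L)\subseteq(G+K\hull L)\setminus(G+L)$, a mild streamlining of the paper's two preparatory lemmas.
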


To prove Theorem \ref{thm: general-holder-lower}, we require two lemmas. Below is the first.

\begin{lemma}\label{lemma: line-segment-halfspace}
    Let $K,L \in \mathcal K^n$. There exists $p \in K$ and $q\in L$ such that $|q - p| = d_\H(K,L)$ and either $(p,q] \subset K\,\hull\,L \setminus K$ or $[p,q) \subset K\,\hull\, L \setminus L$. Moreover, there exists a hyperplane orthogonal to $[p,q]$ running through $p$ (resp. $q$) which supports $K$ (resp. $L$).
\end{lemma}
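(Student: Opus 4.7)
The plan is to pick a pair $(p,q)$ that realizes $d_\H(K,L)$ in whichever direction achieves the max in Definition \ref{def: hausdorff-dist}, and then read off both the segment statement and the supporting hyperplane from the first-order optimality condition for the nearest-point projection onto a closed convex set. Throughout I assume $d_\H(K,L) > 0$, since otherwise $K = L$ and all claims are vacuous.

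By the two-sided formulation $d_\H(K,L) = \max\{\sup_{x\in K}d(x,L),\sup_{y\in L}d(y,K)\}$, at least one of the two suprema equals $d_\H(K,L)$. I treat the case $\sup_{y\in L}d(y,K) = d_\H(K,L)$, which will yield the first alternative $(p,q] \subset K\hull L \setminus K$; the opposite case is handled identically with the roles of $K$ and $L$ (and hence of $p$ and $q$) swapped, producing the second alternative. By compactness of $L$ the outer sup is attained at some $q \in L$, and by compactness of $K$ the metric projection of $q$ onto $K$ gives $p \in K$ with $|p - q| = d(q, K) = d_\H(K,L)$.

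For the segment statement, $[p,q] \subset K\hull L$ is immediate from convexity of the hull, so I only need $(p,q] \cap K = \emptyset$. If $r = p + t(q-p) \in K$ for some $t \in (0,1]$, then $|r - q| = (1-t)|p-q| < |p-q|$, contradicting the minimality of $|p - q|$. For the hyperplane, set $v := (q-p)/|q-p|$ and $H := \{x \in \R^n : \langle x - p, v\rangle = 0\}$. If some $r \in K$ satisfied $\langle r - p, v\rangle > 0$, then by convexity $p + t(r-p) \in K$ for all $t \in [0,1]$, and the computation $\frac{d}{dt}|p + t(r-p) - q|^2\big|_{t=0} = -2\langle q - p, r-p\rangle < 0$ would produce points in $K$ strictly closer to $q$ than $p$, a contradiction. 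Hence $K$ lies in the closed halfspace bounded by $H$ on the side opposite $q$, so $H$ supports $K$ at $p$.

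No step poses a serious obstacle: the lemma is essentially a repackaging of the first-order optimality condition for the metric projection onto a convex body, combined with the definition of the Hausdorff distance. The only care required is to select the correct alternative according to which of the two suprema in Definition \ref{def: hausdorff-dist} realizes $d_\H(K,L)$, and to bundle the two symmetric cases into one argument.
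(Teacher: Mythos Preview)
Your proof is correct and follows essentially the same approach as the paper: choose the side achieving the max in the Hausdorff distance, attain it by compactness, rule out interior points of $(p,q]$ lying in $K$ by the minimality of $|p-q|$, and obtain the supporting hyperplane from the optimality of the metric projection. The only cosmetic difference is that you verify the supporting hyperplane claim directly via the derivative $\frac{d}{dt}\big|_{t=0}|p+t(r-p)-q|^2$, whereas the paper invokes a standard result from the literature for the same fact.
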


\begin{proof}
    Recalling Definition \ref{def: hausdorff-dist}, without loss of generality, we assume
    $$d_\H(K,L) = \sup_{x\in L}d(x,K).$$ 
    The compactness of $K$ and $L$ yields $p \in K$ and $q\in L$ such that $d_\H(K,L) = |q - p|$. We claim that $(p,q] \subset K\,\hull\, L \setminus K$. Clearly, $(p,q] \subset K\,\hull\, L$. To show $(p,q] \subset K^c$, we argue by contradiction. If $(p,q] \not\subset K^c$, then there is $t\in (0,1]$ with $tq + (1 - t)p \in K$. It follows $$|q - (tq + (1 - t)p)| = (1-t)|q-p| < |p - q|,$$ contradicting the choice of $p$ as the closest point in $K$ from $q$. 

    Finally, let $H: \R^n \to \R$ be $$H(x) = \frac{q-p}{|q-p|} \cdot (x-p).$$ Clearly, $\{H=0\}$ is a hyperplane running through $p$ and orthogonal to $[p,q]$. By \cite[Theorem 1.14]{hug2020lectures} applied with $q=x$, $K=A$, $\{H=0\}$ supports $K$ at $p$, and $K \subset \{H \leq 0\}$.
\end{proof}

To preface our second lemma, we define a cap of a convex body.

\begin{definition}\label{defn: cap}
    Let $u \in \mathbb{S}^{n-1}$, $h >0$, and $G \subset \R^n$ be a convex body. Recalling the definition of support functions from \ref{def: support-function}, the cap of $G$ in the direction $u$ with height $h$ is given by $$C_G(u,h):= \{x\in G:u\cdot x\geq h_G(u)-h\}.$$
\end{definition}

In lay terms, a cap associated to a convex body $G$ cuts away part of $G$ using a supporting hyperplane, with $h$ being the height of the cap.

\begin{lemma}\label{lemma: cap-existence}
    Let $K, L \in \K(G)$. There exists $u \in \mathbb{S}^{n-1}$ and $q \in L$ (or $q\in K$) such that $$\mathrm{int}( C_G(u, d_\H(K,L))+q) \subset (G+K\hull L) \setminus(G+K)$$ (or respectively $\subset (G+K\hull L) \setminus(G+L)$).
\end{lemma}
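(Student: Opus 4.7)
The plan is to use Lemma \ref{lemma: line-segment-halfspace} to produce a distinguished pair of nearest/farthest points together with a supporting hyperplane, and then exhibit a suitable cap translate whose interior sits inside $G + K\hull L$ but just past $G+K$ in that one privileged direction.

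Concretely, I would apply Lemma \ref{lemma: line-segment-halfspace} to obtain $p \in K$, $q \in L$ with $|q-p| = d_\H(K,L)$, and, in the first case of that lemma, a hyperplane through $p$ orthogonal to $[p,q]$ supporting $K$. Set $u := (q-p)/|q-p| \in \mathbb{S}^{n-1}$ and $h := d_\H(K,L)$, so that the two hypotheses read $q - p = hu$ and $u \cdot k \leq u \cdot p$ for every $k \in K$. I claim that $\mathrm{int}(C_G(u,h) + q)$ satisfies the two required inclusions with $q \in L$.

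Take $y$ with $y - q \in \mathrm{int}(C_G(u,h))$, so $y - q \in G$ and $u \cdot (y-q) > h_G(u) - h$. Inclusion in $G + K\hull L$ is immediate from $y = (y-q) + q \in G + L \subset G + K\hull L$. For the exclusion from $G + K$, suppose for contradiction that $y = g + k$ with $g \in G$ and $k \in K$. Using $u \cdot q - u \cdot p = h$ together with the supporting hyperplane inequality $u \cdot k \leq u \cdot p$, one computes
$$u \cdot g \;=\; u \cdot y - u \cdot k \;>\; \bigl(h_G(u) - h\bigr) + u \cdot q - u \cdot p \;=\; h_G(u),$$
which contradicts $g \in G$, since $u \cdot g \leq h_G(u)$ by definition of the support function. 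The symmetric case of Lemma \ref{lemma: line-segment-halfspace}, in which $[p,q)\subset K\hull L\setminus L$ and a hyperplane supports $L$ at $q$, is handled identically after swapping the roles of $(p,K)$ and $(q,L)$ and replacing $u$ by $-u$; the outcome is $\mathrm{int}(C_G(-u,h) + p) \subset (G + K\hull L)\setminus (G+L)$.

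I do not anticipate a substantive obstacle: the argument is essentially a one-line support function computation once Lemma \ref{lemma: line-segment-halfspace} is in hand. The one delicate bookkeeping point is the calibration between the ``height'' $h$ of the cap and the length $|q-p| = d_\H(K,L)$ of the connecting segment, since it is exactly this matching that converts the strict cap inequality $u\cdot(y-q) > h_G(u) - h$ into the strict violation $u\cdot g > h_G(u)$ needed to push $g$ out of $G$.
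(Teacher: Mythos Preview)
Your proof is correct and follows essentially the same approach as the paper: both invoke Lemma \ref{lemma: line-segment-halfspace} to obtain $p,q$ and the direction $u=(q-p)/|q-p|$, and then use the supporting-hyperplane inequality $u\cdot k\le u\cdot p$ together with the cap inequality to force $u\cdot g>h_G(u)$ for any putative decomposition $y=g+k$. The only cosmetic difference is that the paper phrases the exclusion via the set-containment chain $G+K\subset G+\{H\le 0\}\subset\{H\le h_G(u)\}$, whereas you argue pointwise by contradiction; the content is identical.
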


\begin{proof}
    Label $d := d_\H (K,L)$. By Lemma \ref{lemma: line-segment-halfspace}, without loss of generality, there exists $p \in K$ and $q \in L$ such that $\abs{q-p}=d$ and $(p,q] \subset K \hull L \setminus K$. Let $u := \frac{q-p}{d}$ and  $H: \R^n \to \R$ be the linear map 
    $$H(x)=u\cdot(x-p).$$ 
    By Lemma \ref{lemma: line-segment-halfspace} again, $\{H\leq0\}$ is a supporting half-space of $K$ at $p$, which implies $G+K \subset G+\{H \leq 0\}$. Additionally, $G+\{H \leq 0\} \subset \{H \leq h_G(u)\}$, since given $x\in G$ and $y \in \{H \leq 0\}$, \begin{align*}
        H(x+y) &= u\cdot (x+y - p)= u\cdot x + u\cdot (y - p) \leq h_G(u).
    \end{align*} Since $q \in K\hull L$, it follows $(G+q) \subset (G+K\hull L)$, which implies $$((G+q) \cap \{H>h_G(u)\}) \subset ((G+K\hull L) \setminus (G+K)).$$
    Finally, because $H(q)=u\cdot(q-p)=d$,
    \begin{align*}
        (G+q)\cap\{H>h_G(u)\}&=G\cap\big(\{H>h_G(u)\}-q\big)+q\\
        &=G\cap\{H>h_G(u)-d\}+q,
    \end{align*}
    which contains
    $$\mathrm{int}(C_G(u,d)+q)=\mathrm{int}(G\cap\{H\geq h_G(u)-d\}+q).$$
\end{proof}

We are now finally ready to prove Theorem \ref{thm: general-holder-lower}.

\begin{proof}[Proof of Theorem \ref{thm: general-holder-lower}]
    Let $d := d_\H(K,L)$. By Equation \ref{eqn: rhoG-dG-qisometry}, it suffices to find $C > 0$ such that $$C \cdot d_\H(K,L)^n \leq \rho_G(K,L) = 2\V(G+K\hull L) - \V(G+K)-\V(G+L)$$ for all $K, L \in \K(G)$. It then again suffices to lower bound either of$$\V(G+K\hull L)-\V(G+K)\quad\text{or}\quad\V(G+K\hull L)-\V(G+L).$$ By Lemma \ref{lemma: cap-existence}, there exists a translate of $\mathrm{int}(C_G(u, d))$ in one of these two sets. Without loss of generality, let $u\in\mathbb{S}^{n-1}$ and $q\in L$ such that $$\mathrm{int}(C_G(u, d)))+q\subset ((G+K\hull L)\setminus (G+K)).$$
    As a direct consequence,
\begin{equation}\label{eq: vol_est}
\V(C_G(u,d))\leq \V(G+K\hull L)-\V(G+K).
\end{equation}
   Next we claim that there exists a cone with base area $\alpha \cdot (\beta d)^{n-1}$ and height $\beta d$ contained in $C_G(u,d)$, where $\alpha, \beta > 0$ are some constants depending only on $G$.
    
    Firstly, assume without loss of generality by translation that $0 \in \mathrm{int}\,G$. There then exists $r>0$ such that $rB \subset G$. Recalling Definition \ref{def: support-function},
    since $G$ is compact, there exists some $x_0\in G$ such that $u\cdot x_0 = h_G(u)$. Define the set
    $$U := (rB \cap \{x: u \cdot x =0\})\,\hull\,\{x_0\}.$$
    $U$ is thus a cone contained in $G$ with height $h_G(u)$ and an $(n-1)$-spherical base of radius $r$. We now define the set
    $$V_0 := C_G(u,d) \cap U = \{u\cdot x \geq h_G(u)-d\} \cap U.$$
Note that $u\cdot x_0>h_G(u)-d$, and $\{u\cdot x = h_G(u)-d\}$ is parallel to the base of $U$. As a result, using homotheties, $V_0$ is a cone with height $\min\{d, h_G(u)\}$ and an $(n-1)$-spherical base of radius $\min\{\frac{rd}{h_G(u)}, r\}$. Furthermore, since $d \leq \mathrm{diam}(G)$ and $r\leq h_G(u)\leq\mathrm{diam}(G)$, 
    $$\frac{rd}{\mathrm{diam}(G)} \leq \min\{d,h_G(u)\},\quad\text{and}\quad\frac{r^2d}{\mathrm{diam}(G)^2} \leq \min\Big\{\frac{rd}{h_G(u)},r\Big\}.$$
    This implies there is a cone $V_1\subset V_0$ with height $\frac{rd}{\mathrm{diam}(G)}$ and an $(n-1)$-spherical base of radius $\frac{r^2d}{\mathrm{diam}(G)^2}$, proving the claim. 

Using \eqref{eq: vol_est}, since $C_G(u,d)$ contains $V_1$, we obtain that
    $$\V(C_G(u,d))\geq\V(V_1)=\frac{\omega_{n-1}}{n}\left(\frac{r}{\mathrm{diam}(G)}\right)^{2n-1}d^n.$$
    Taking $C = \frac{\omega_{n-1}}{n}\left(\frac{r}{\mathrm{diam}(G)}\right)^{2n-1}$ gives the theorem.
\end{proof}

In fact, a stronger result than Theorem \ref{thm: general-holder-lower} can be shown when we impose a smoothness assumption on the boundary of the convex body $G$.

\begin{definition}\label{defn: rolls freely}
We say a ball of radius $r\geq0$ \textit{rolls freely in} $G$ if, at each point $p$ on the boundary of $G$, there is a ball of radius $r$ containing $p$ and contained in $G$, i.e. there is $x\in\R^n$ such that $p\in rB+x\subset G$.
\end{definition}

\begin{remark}
The property that a ball of fixed radius $r>0$ rolls freely in $G$ is equivalent to many other boundary regularity properties of $G$. For example, in \cite{schneider2014}, Theorem 3.2.2 gives that this is equivalent to $G$ being decomposable with $rB$ as a summand, for some $r>0$, and \cite{walther1999} shows this is equivalent to the property that $\partial G$ is a $C^1$ embedded manifold whose spherical image map $\nu:\partial G\to\mathbb{S}^{n-1}$, sending boundary points to their outer unit normal vector, is Lipschitz with constant less than or equal to $\frac{1}{r}$. In particular, if $\partial G$ is $C^2$ then there exists a fixed $r>0$ such that a ball of radius $r$ rolls freely in $G$.
\end{remark}

\begin{theorem}\label{thm: c2-holder-lower}
If a ball of radius $r>0$ rolls freely in $G\in\K^n$, then there exists $C>0$ such that
$$C\cdot d_\H(K,L)^{\frac{n+1}{2}}\leq d_G(K,L)$$
holds for all $K,L\in\K(G)$. In particular, this holds when $G$ is a $C^2$ convex body.
\end{theorem}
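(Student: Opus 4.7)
The plan is to follow the template of the proof of Theorem \ref{thm: general-holder-lower}, but improve the lower bound on the cap volume $\V(C_G(u, d))$ from $d^n$ to $d^{(n+1)/2}$ using the rolling ball hypothesis. Setting $d := d_\H(K, L)$, Lemma \ref{lemma: cap-existence} together with \eqref{eqn: rhoG-dG-qisometry} reduces the statement to producing a constant $C' > 0$ depending only on $G$ such that $\V(C_G(u, d)) \geq C' d^{(n+1)/2}$ for every $u \in \mathbb{S}^{n-1}$ and every $d \in (0, \mathrm{diam}(G)]$.

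The key geometric input is the identification of a large spherical cap inside $C_G(u, d)$. Fix $u \in \mathbb{S}^{n-1}$ and choose $x_0 \in G$ with $u \cdot x_0 = h_G(u)$; such $x_0$ lies on $\partial G$. The rolling ball hypothesis produces $c \in \R^n$ with $x_0 \in rB + c \subset G$. Since $rB + c \subset G \subset \{x : u \cdot x \leq h_G(u)\}$ and $x_0$ realizes equality, the hyperplane $\{u \cdot x = h_G(u)\}$ supports $rB + c$ at $x_0$, forcing $c = x_0 - ru$. Consequently $C_{rB + c}(u, h) \subset C_G(u, h)$ for every $h > 0$, so it suffices to bound the spherical cap volume from below.

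Translating $x_0$ to the origin and integrating along the direction $u$ over the slab $\{y : |y + ru|^2 \leq r^2,\ -h \leq u \cdot y \leq 0\}$ gives
\[
\V(C_{rB + c}(u, h)) = \omega_{n-1} \int_0^{\min(h,\, 2r)} (2rs - s^2)^{(n-1)/2}\, ds.
\]
For $h \leq r$ the integrand dominates $(rs)^{(n-1)/2}$, yielding a lower bound of order $r^{(n-1)/2} h^{(n+1)/2}$. For $h \in (r, \mathrm{diam}(G)]$, monotonicity of the cap in $h$ gives $\V(C_G(u, h)) \geq \V(C_G(u, r))$, a fixed positive constant depending only on $G$ that dominates $h^{(n+1)/2}$ up to the factor $\mathrm{diam}(G)^{(n+1)/2}$. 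Combining both regimes and invoking \eqref{eqn: rhoG-dG-qisometry} finishes the proof; the $C^2$ case follows from the preceding remark. We expect no serious obstacle beyond bookkeeping of constants, since the substance lies entirely in the spherical cap volume computation, which cleanly replaces the inscribed-cone estimate used in the proof of Theorem \ref{thm: general-holder-lower}.
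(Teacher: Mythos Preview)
Your proposal is correct and follows essentially the same approach as the paper: reduce via Lemma~\ref{lemma: cap-existence} and \eqref{eqn: rhoG-dG-qisometry} to a uniform lower bound on cap volumes, use the rolling ball to place a spherical cap $C_{rB+c}(u,h)$ inside $C_G(u,h)$, and split into the regimes $h\le r$ and $h>r$. The only difference is cosmetic: you estimate the spherical cap volume via the algebraic integral $\omega_{n-1}\int_0^h (2rs-s^2)^{(n-1)/2}\,ds$ and the elementary bound $2rs-s^2\ge rs$ for $s\le r$, whereas the paper passes through the trigonometric form of Lemma~\ref{lemma: cap-vol-lemma} and the estimates $\sin\theta\ge\tfrac{2}{\pi}\theta$, $\arccos(1-t)\ge\sqrt{2t}$; one small point to tighten is that in the $h>r$ regime your ``fixed positive constant'' $\V(C_G(u,r))$ still depends on $u$, so you should explicitly bound it below by the hemisphere volume $\tfrac12\omega_n r^n$ (as the paper does) to get a constant depending only on $G$.
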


To prove this, we compute the volume of an $n$-dimensional spherical cap. The calculation is standard, but since we could not find an appropriate reference, we include the details.

\begin{lemma}\label{lemma: cap-vol-lemma}
    For any $u \in \mathbb S^{n-1}$ and $0 \leq h \leq r$,
    $$\V(C_{rB}(u,h)) = \omega_{n-1}r^n\int_0^{\arccos(1-h/r)}(\sin\theta)^n\,d\theta.$$
\end{lemma}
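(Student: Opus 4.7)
The plan is to use the rotational symmetry of the ball and compute the cap volume by slicing with hyperplanes orthogonal to $u$. Since rotations are volume-preserving and leave $rB$ invariant, we may assume without loss of generality that $u = e_n$, in which case $h_{rB}(u) = r$ and
\[
C_{rB}(u,h) = \{x \in rB : x_n \geq r - h\}.
\]

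Next, I would slice this cap by the hyperplanes $\{x_n = t\}$ for $t \in [r-h, r]$. Each such slice is the $(n-1)$-dimensional ball of radius $\sqrt{r^2 - t^2}$, whose $(n-1)$-volume equals $\omega_{n-1}(r^2 - t^2)^{(n-1)/2}$. Applying Fubini's theorem yields
\[
\V(C_{rB}(u,h)) = \omega_{n-1}\int_{r-h}^{r}(r^2 - t^2)^{(n-1)/2}\,dt.
\]

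Finally, I would perform the trigonometric substitution $t = r\cos\theta$, so that $dt = -r\sin\theta\,d\theta$ and $r^2 - t^2 = r^2\sin^2\theta$. The limits transform as $t = r \mapsto \theta = 0$ and $t = r - h \mapsto \theta = \arccos(1 - h/r)$ (this is where $0 \leq h \leq r$ is used, to ensure the argument of $\arccos$ lies in $[0,1]$). Collecting the powers of $r$ and using that $\sin\theta \geq 0$ on the interval of integration gives precisely
\[
\V(C_{rB}(u,h)) = \omega_{n-1}r^n\int_0^{\arccos(1-h/r)}(\sin\theta)^n\,d\theta.
\]

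There is no serious obstacle here; the only points that require a moment of care are the reduction via rotational symmetry (using that the Lebesgue measure and the ball $rB$ are both $O(n)$-invariant) and checking that the endpoint $r - h$ is nonnegative so that the substitution is legitimate, both of which are immediate from the hypotheses.
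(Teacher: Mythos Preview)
Your proof is correct and follows essentially the same route as the paper: reduce to $u=e_n$ by rotational symmetry, slice orthogonally and apply Fubini to obtain $\omega_{n-1}\int_{r-h}^{r}(r^2-t^2)^{(n-1)/2}\,dt$, then make the trigonometric substitution $t=r\cos\theta$. The only cosmetic difference is that the paper parameterizes the slices by distance from the apex (so the integral first runs over $[0,h]$) and passes through an intermediate normalization before substituting $\cos\theta$, whereas you go directly from the $t$-integral to the $\theta$-integral.
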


\begin{proof}
    By rotational symmetry, $\V(C_{rB}(u,h))$ does not depend on the choice of $u$, and so we can assume that $u = e_n$ is the $n$-th standard basis vector. Since the slice of a spherical cap is a ($n-1$)-dimensional sphere, Fubini's theorem shows that
    \begin{align*}
        \V(C_{rB}(e_n,h)) &= \int_0^h \V_{n-1}(\{x \in \R^{n-1}: |x|^2 \leq r^2 - (r - t)^2\})\,dt\\
        &= \omega_{n-1}\int_0^h (r^2 - (r - t)^2)^{\frac{n-1}{2}}\,dt,
    \end{align*}
    where $\V_{n-1}$ is the ($n-1$)-dimensional Lebesgue measure. Making the appropriate substitutions, 
    $$\V(C_{rB}(e_n,h)) = \omega_{n-1} r^n \int_{1-h/r}^1 (1 - x^2)^{\frac{n - 1}{2}}\,dx,$$
    which reduces to the stated formula by substituting $x = \cos\theta$.
\end{proof}

We now prove \ref{thm: c2-holder-lower}.

\begin{proof}
Without loss of generality, we can translate $G$ so that $0 \in \text{int}(G)$. Let $\nu:\partial G\to\mathbb{S}^{n-1}$ again be the spherical image map sending boundary points to their outer unit normal vector. 

By Lemma \ref{lemma: cap-existence}, it suffices to show that there exists $C'>0$ such that, for all $u\in\mathbb{S}^{n-1}$ and $0<h<\text{diam}(G)$,
\begin{equation}C'h^{\frac{n+1}{2}}\leq \V(C_G(u,h)),\label{eqn: theorem 3.6 goal}\end{equation}
where $C_G(u,h)$ denotes the cap of $G$ in the direction $u$ with height $h$ (see Definition \ref{defn: cap}).
This would then imply, for $K,L\in\K(G)$, with $u\in\mathbb{S}^{n-1}$ selected as in Lemma \ref{lemma: cap-existence},
\begin{align*}
C'd_\H(K,L)^{\frac{n+1}{2}}&\leq \V(C_G(u,d_\H(K,L)))\leq\V(G+K\,\hull\, L)-\V(G+K)\\
&\leq\binom{n}{\lfloor n/2\rfloor}d_G(K,L),
\end{align*}
where the last inequality is due to inequality \ref{eqn: rhoG-dG-qisometry}.

Now let $r>0$ be such that a ball of radius $r$ rolls freely in $G$. That is, for each $p\in\partial G$ there is some $x\in\R^n$ such that $p\in rB+x\subset G$.
Now, fixing any $p\in\partial G$, and choosing the appropriate such $x\in\R^n$, the spherical cap $C_{rB+x}(\nu(p),h)$ is contained in the cap $C_G(\nu(p),h)$, and thus
$$\V(C_G(\nu(p),h))\geq\V(C_{rB+x}(\nu(p),h)).$$
Supposing $h\leq r$, by \ref{lemma: cap-vol-lemma}, the volume of a radius-$r$ spherical cap of height $h$ is given by 
\begin{equation*}
\omega_{n-1}r^n\int_0^{\arccos(1-h/r)}(\sin\theta)^n\,\mathrm{d}\theta.
\end{equation*}
From the estimates $\sin\theta\geq\frac{2}{\pi}\theta$ for $0\leq\theta\leq\frac{\pi}{2}$, which holds for $h\leq r$, and $\arccos(1-t)\geq\sqrt{2t}$, we obtain
\begin{align*}
\V(C_G(\nu(p),h))&\geq\omega_{n-1}\Big(\frac{2r}{\pi}\Big)^n\int_0^{\sqrt{2h/r}}\theta^n\,\mathrm{d}\theta=\frac{\omega_{n-1}\pi^{n+1}}{n+1}\left(\frac{2r}{\pi}\right)^n\left(\frac{2h}{r}\right)^{\frac{n+1}{2}}.
\end{align*}
Since this holds for any choice of $p\in\partial G$ and $0<h\leq r$, if we take $$C'=\frac{\omega_{n-1}\pi^{n+1}r^{(n-1)/2}}{2^{(n+1)/2}(n+1)},$$
we get the desired inequality \ref{eqn: theorem 3.6 goal} in the case $h\leq r$. If $h\geq r$, then a hemisphere of $rB+x$ is contained in $C_G(\nu(p),h)$, and so, since $h\leq\text{diam}(G)$, we simply have
$$\V(C_G(\nu(p),h))\geq\frac{1}{2}\V(rB)\geq\frac{1}{2}\V(rB)\left(\frac{h}{\text{diam}(G)}\right)^\frac{n+1}{2}.$$
Thus taking $C'=\frac{1}{2}\omega_nr^n\text{diam}(G)^{-(n+1)/2}$ gives inequality \ref{eqn: theorem 3.6 goal} in the case where $h\geq r$.
\end{proof}

\section{H\"older Exponent Optimality} \label{sec: holder exponent optimality}

In the section, we prove Theorem \ref{thm: bounds are optimal}. To do this, we prove Theorems \ref{thm: lipschitz-optimality}, \ref{thm: polytope-optimality}, and \ref{thm: n+1/2 optimal}, which combined constitute Theorem \ref{thm: bounds are optimal}.

\begin{theorem}\label{thm: lipschitz-optimality}
     No inequality of the form $d_G(K,L) \leq C \cdot d_\H(K,L)^\beta$ holds for all $K,L \in \mathcal K(G)$ when $\beta > 1$, for any choice of $C > 0$. 
\end{theorem}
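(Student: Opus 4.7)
The plan is to exhibit a family of pairs $(K, L_\varepsilon) \in \mathcal K(G) \times \mathcal K(G)$ along which $d_G$ and $d_\H$ both scale linearly in $\varepsilon$, so that their ratio stays bounded below by a positive constant. Since any estimate $d_G \leq C\,d_\H^\beta$ with $\beta > 1$ forces $d_G / d_\H \to 0$ whenever $d_\H \to 0$, such a family immediately contradicts the existence of any finite $C$.

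Concretely, I would fix $x_0 \in \mathrm{int}(G)$ and a unit vector $v \in \R^n$, and set $K := \{x_0\}$, $L_\varepsilon := \{x_0 + \varepsilon v\}$ for $\varepsilon > 0$ small enough that $L_\varepsilon \subset G$. Then $d_\H(K, L_\varepsilon) = \varepsilon$ and $K \,\hull\, L_\varepsilon = [x_0, x_0 + \varepsilon v]$. Most terms of the sum \eqref{eqn: dg-def} defining $d_G(K, L_\varepsilon)$ vanish. Indeed, by translation invariance and homogeneity (Theorem~\ref{thm: mv-properties}(5,6)), writing a singleton as $0 \cdot B$ gives $\MV_j(G, K) = \MV_j(G, L_\varepsilon) = 0$ for every $j \geq 1$. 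Moreover, for $j \geq 2$ one has $\MV_j(G, [x_0, x_0 + \varepsilon v]) = 0$, because $\V(sG + tS)$ is linear in $t$ when $S$ is a segment (a standard consequence of slicing the Minkowski sum with a segment perpendicular to its direction). Only the $j=1$ term survives, so
\[
d_G(K, L_\varepsilon) = 2\,\MV_1(G, [x_0, x_0 + \varepsilon v]) = 2\varepsilon\,\MV_1(G, [0,v]).
\]

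It remains to confirm $\MV_1(G, [0,v]) > 0$, which I would derive from the classical projection identity $n\,\MV_1(G, [0,v]) = |v| \cdot V_{n-1}(\pi_{v^\perp}(G))$, where $\pi_{v^\perp}$ is orthogonal projection onto the hyperplane $v^\perp$; the right side is strictly positive because $G$ has nonempty interior and hence so does its projection onto any hyperplane. Consequently $d_G(K, L_\varepsilon)/d_\H(K, L_\varepsilon) = 2\MV_1(G, [0,v])$ is a positive constant independent of $\varepsilon$, and for every $\beta > 1$,
\[
\frac{d_G(K, L_\varepsilon)}{d_\H(K, L_\varepsilon)^\beta} = \frac{2\,\MV_1(G, [0,v])}{\varepsilon^{\beta - 1}} \longrightarrow \infty \quad \text{as } \varepsilon \to 0^+,
\]
which yields the claim. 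The only (mild) technical obstacle is the justification of the two classical mixed-volume facts invoked above (linearity of $\V(\cdot + tS)$ in $t$ for a segment $S$, and the projection formula for $\MV_1(G, [0,v])$); if a direct citation is inconvenient, both may be proved from scratch by expanding the Minkowski-sum volume polynomial from Definition~\ref{mixed_vol_def} and slicing perpendicular to $v$.
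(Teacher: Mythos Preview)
Your argument is correct and takes a genuinely different route from the paper's. The paper's proof fixes $0\in G$ and compares $K=G$ with $L=rG$ for $r\nearrow 1$; it works with the volume-based quasi-metric $\rho_G$ (Definition~\ref{def: rhoG-deltaG}), computing $\rho_G(G,rG)=\V(G)\bigl(2^n-(1+r)^n\bigr)$ and $d_\H(G,rG)=M(1-r)$, so that the quotient $\rho_G/d_\H^\beta$ blows up like $(1-r)^{1-\beta}$. Your approach instead uses two singletons $K=\{x_0\}$ and $L_\varepsilon=\{x_0+\varepsilon v\}$, and computes $d_G$ directly from \eqref{eqn: dg-def}: the degenerate nature of points and segments kills every term except $j=1$, leaving $d_G(K,L_\varepsilon)=2\varepsilon\,\MV_1(G,[0,v])$, which is manifestly linear in $d_\H$. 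Both constructions produce the same linear scaling; the paper's is quicker because it avoids any mixed-volume identities beyond the polynomial expansion, while yours has the appeal of isolating a single explicit term and making the constant $2\MV_1(G,[0,v])$ completely transparent. The two auxiliary facts you flag (vanishing of $\MV_j(G,S)$ for a segment $S$ when $j\geq 2$, and the projection formula $n\,\MV_1(G,[0,v])=V_{n-1}(\pi_{v^\perp}G)$) are standard and follow immediately from slicing $\V(G+t[0,v])$ perpendicular to $v$, so no gap remains.
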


\begin{proof}
    By way of contradiction, suppose that there is $\beta > 1$ and $C>0$ such that $d_G(K,L) \leq C \cdot d_\H(K,L)^\beta$ for all $K,L \in \K(G)$. Equivalently,
    $$\frac{d_G(K,L)}{d_\H(K,L)^\beta}$$ is uniformly bounded for all $K \neq L$, which is again equivalent to
    $$\frac{\rho_G(K,L)}{d_\H(K,L)^\beta}$$
    being uniformly bounded for $K \neq L$, recalling Definition \ref{def: rhoG-deltaG} and Equation \eqref{eqn: rhoG-dG-qisometry}. By the translational invariance of the Lebesgue measure, we can assume that $0 \in G$, so that $rG \subset G$ for $0 < r < 1$. Then $$\rho_G(G, rG) = \V(G + G) - \V(G + rG) = \V(G)(2^n - (1 + r)^n)$$ and $d_\H(G, rG) = M(1 - r)$, where $M = \max_{|x| = 1} |h_G(x)|$. We deduce
    $$\frac{\rho_G(G,rG)}{d_\H(G,rG)^\beta} = \frac{\V(G)}{M^\beta} \frac{2^n - (1 + r)^n}{(1 - r)^\beta}$$
    is uniformly bounded for all $r \in (0,1)$, which is a contradiction since $\frac{2^n - (1 + r)^n}{(1 - r)^\beta} \to \infty$ as $r$ increases to $1$.    
\end{proof}

The following lemma will allow us, by constructing examples which exhibit the worst-case behavior, to prove optimality results on the exponents of H\"older bounds of $d_G$ by $d_\H$.

\begin{lemma}\label{lemm: cap-counterexample}
Let $G\subset\R^n$ be a convex body, and let $u\in\mathbb{S}^{n-1}$. Let, for $t>0$,
\begin{equation}C_t=C_G(u,t),\quad\text{and}\quad S_t=G\cap\{x\in\R^n:u\cdot x=h_G(u)-t\}.
\label{eqn: cap used for counterexample}\end{equation}
Then the following hold:
\begin{enumerate}
\item $d_\H(C_t,S_t)\geq t$.
\item $d_\H(C_t,S_t)\to0$ as $t\to0$.
\item $\V(G+C_t)-\V(G+S_t)\leq 2^n\V(C_t)$.
\end{enumerate}
\end{lemma}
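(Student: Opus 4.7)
The plan is to handle the three parts in order; (1) and (2) are direct, while the substance lies in (3), which reduces via a set-theoretic identity to the Minkowski doubling $\V(C_t + C_t) = 2^n \V(C_t)$.

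For (1), since $S_t \subset C_t$, we have $d_\H(C_t, S_t) = \sup_{x \in C_t} d(x, S_t)$. Any $x \in C_t$ attaining $u \cdot x = h_G(u)$ (such $x$ exists by compactness of $G$) satisfies $|x - s| \geq u \cdot (x - s) = t$ for every $s \in S_t$, so $d_\H(C_t, S_t) \geq t$. For (2), fix $x_0 \in \textup{int}(G)$, so $u \cdot x_0 < h_G(u)$, and restrict to $t$ small enough that $u \cdot x_0 < h_G(u) - t$. For any $x \in C_t$ convexity gives $[x, x_0] \subset G$, and this segment crosses the hyperplane $\{u \cdot z = h_G(u) - t\}$ at $y = (1 - s^*) x + s^* x_0$ with $s^* = (u \cdot x - h_G(u) + t)/(u \cdot x - u \cdot x_0) \in [0,1]$; then $y \in S_t$ and $|x - y| \leq s^* \, \textup{diam}(G) \leq \textup{diam}(G) \cdot t / (h_G(u) - t - u \cdot x_0)$, which tends to $0$ as $t \to 0$ uniformly in $x$.

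The core of the argument is (3). Set $G^{-} := G \cap \{u \cdot z \leq h_G(u) - t\}$; the key identity is
\[
G + S_t \;=\; G^{-} + C_t.
\]
The inclusion $\subset$ follows from $G = G^{-} \cup C_t$ and $S_t \subset C_t \cap G^{-}$: decomposing $G + S_t = (G^{-} + S_t) \cup (C_t + S_t)$, both pieces lie in $G^{-} + C_t$ (for the second, read $c + s$ as $s + c$ with $s \in G^{-}$, $c \in C_t$). For the reverse inclusion, given $z = a + c$ with $a \in G^{-}$ and $c \in C_t$, the segment $[a, c] \subset G$ by convexity, and since $u \cdot a \leq h_G(u) - t \leq u \cdot c$, it crosses the slice level at $w = \lambda c + (1 - \lambda) a$, where $\lambda = (u \cdot c - h_G(u) + t)/(u \cdot c - u \cdot a) \in [0, 1]$ (and if the denominator vanishes, then $a, c \in S_t$ already and any $\lambda$ works). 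Then $w \in S_t$, and the complementary combination $z - w = (1 - \lambda) c + \lambda a$ is again in $G$ by convexity, so $z = (z - w) + w \in G + S_t$.

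Armed with the identity, $G + C_t = (G^{-} + C_t) \cup (C_t + C_t) = (G + S_t) \cup (C_t + C_t)$, whence $(G + C_t) \setminus (G + S_t) \subset C_t + C_t = 2 C_t$, and $\V(2 C_t) = 2^n \V(C_t)$ by homogeneity of volume, yielding (3). The main obstacle I anticipate is the reverse inclusion of the identity above; the convex-combination trick producing $w \in S_t$ from the pair $(a, c)$ is the only place where the convexity of $G$ genuinely enters.
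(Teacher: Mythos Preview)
Your proof is correct, and for the substantive part~(3) it is essentially the paper's argument: both hinge on the observation that if $a\in G^{-}$ and $c\in C_t$ then some convex combination $w\in[a,c]$ lands in $S_t$ while the complementary combination $a+c-w$ stays in $G$. You package this as the identity $G+S_t=G^{-}+C_t$, whereas the paper runs a direct case analysis on $p\in G$ to get $(G+C_t)\setminus(G+S_t)\subset 2C_t$; the content is the same. (One small slip: your displayed formula for $\lambda$ is actually $1-\lambda$; the correct value is $\lambda=(h_G(u)-t-u\cdot a)/(u\cdot c-u\cdot a)$, though of course the existence of the crossing point is all that matters.)

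Your argument for~(2) is genuinely different and cleaner than the paper's. The paper shows separately that $C_t$ and $S_t$ both converge in $d_\H$ to the face $C_0=\{x\in G:u\cdot x=h_G(u)\}$, which requires an auxiliary construction to push $C_0$ into $S_t+\delta B$. Your direct estimate---drawing the segment from $x\in C_t$ to a fixed interior point $x_0$ and reading off where it meets the slice---gives the uniform bound $d(x,S_t)\le t\cdot\textup{diam}(G)/(h_G(u)-t-u\cdot x_0)$ in one line and avoids the detour through $C_0$ entirely.
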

\begin{proof}
Statement (1) follows from the fact that $S_t\subset C_t$, and also $S_t$ is contained in the hyperplane $\{x\in\R^n:u\cdot x=h_G(u)-t\}$. Therefore,
$$d_\H(C_t,S_t)=\sup_{p\in C_t}d(p,S_t)\geq\sup_{p\in C_t}|u\cdot p-(h_G(u)-t)|=t.$$

To prove (2), we claim that $C_t$ and $S_t$ converge to $C_0:=\{x\in G:u\cdot x=h_G(u)\}$, which is the facet of $G$ with $u$ as an outward normal vector. Firstly, note that $\{C_{1/n}\}_{n=1}^\infty$ is a decreasing family of compact sets whose intersection is $C_0$, and so $C_{1/n}\to_{d_\H}C_0$ (see, for instance, Lemma 1.8.2 of \cite{schneider2014}). Then for any $\delta>0$, we can choose $n$ large enough that for all $t<1/n$,
\begin{equation}\label{eq: contain}
S_t\subset C_t\subset C_{1/n}\subset C_0+\delta B.
\end{equation}
Let $b \in G$. We have that $u \cdot b = h_G(u) - \varepsilon$. Then for $t < \varepsilon$ we have that 

$$\frac{t }{ \varepsilon}  b +  \Big(1-\frac{t }{ \varepsilon} \Big) C_0 \subset S_t.$$
Since $C_0$ is compact, for all $t$ small enough we have that 

$$C_0  \subset \frac{t }{ \varepsilon}  b +  \Big(1-\frac{t }{ \varepsilon} \Big) C_0  + \delta B \subset S_t + \delta B.$$ 

This containment together with \eqref{eq: contain} implies that $d_\mathcal H(S_t,C_0) \to 0$ as $t \to 0$, finishing the proof of (2).

Lastly, we show statement (3) by showing $(G+C_t)\setminus(G+S_t)\subset 2C_t$. This is equivalent to (3) because $S_t\subset C_t$. Let $x=p+q$, where $p\in G$ and $q\in C_t$, and assume $x\notin G+S_t$. We proceed in two cases:

\smallskip
\noindent \textbf{Case 1: $p\in C_t$.} Then $x=p+q \in 2 C_t.$

\smallskip
\noindent \textbf{Case 2: $p\notin C_t$.} Then there must be some $\lambda\in[0,1]$ such that
$$u\cdot(\lambda p+(1-\lambda)q)=h_G(u)-t,$$
since $u\cdot p<h_G(u)-t$ and $u\cdot q\geq h_G(u)-t$. But again by convexity,
$p':=\lambda p+(1-\lambda)q$ and $q':=(1-\lambda)p+\lambda q$
are both contained in $G$. Then, by construction, $p'\in S_t$, and so $x = q' +p' \in G+S_t$, a contradiction.
\end{proof}

We now show that the $n$ is the optimal H\"older exponent for the lower H\"older bound of $d_\H$ on $d_G$ for $G$ being a polytope.

\begin{theorem}\label{thm: polytope-optimality}
If $G$ is a convex polytope, and  $p<n$, there is no $C>0$ such that 
$$C\cdot d_\H(K, L)^p \leq d_G(K, L)$$ holds for all $K,L\in\K(G)$.
That is, the exponent $n$ gives the optimal H\"older bound for $G$ a polytope. 

\end{theorem}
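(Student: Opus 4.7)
The plan is to exploit the polyhedral structure of $G$ at a vertex, where cap volumes decay as fast as $t^n$, to construct a sequence of pairs $(K,L) = (C_t, S_t)$ in $\mathcal K(G)$ for which $d_G(K,L)/d_\H(K,L)^p$ tends to zero. This reuses Lemma~\ref{lemm: cap-counterexample} in a direction opposite to Theorem~\ref{thm: c2-holder-lower}: for a polytope the boundary has no curvature, so a cap near a vertex is much smaller than the $t^{(n+1)/2}$ scaling available in the smooth case. Concretely, I would fix a vertex $v$ of $G$ and a unit vector $u \in \mathbb{S}^{n-1}$ in the interior of the normal cone to $G$ at $v$, so that $u \cdot x$ attains its maximum on $G$ uniquely at $v$ and $h_G(u) = u \cdot v$. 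With $C_t$ and $S_t$ as in~\eqref{eqn: cap used for counterexample}, Lemma~\ref{lemm: cap-counterexample}(1) and (3), combined with the observation that $C_t \,\hull\, S_t = C_t$ (since $S_t \subset C_t$), yield
\[
d_\H(C_t, S_t) \geq t \quad \text{and} \quad \rho_G(C_t, S_t) = \V(G + C_t) - \V(G + S_t) \leq 2^n \V(C_t).
\]

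The main geometric content is then the bound $\V(C_t) \leq C_0\, t^n$ for all sufficiently small $t$, with $C_0$ depending only on $G$ and $u$. This is where the polyhedral hypothesis enters: since $u$ lies in the interior of the normal cone at $v$, there exists $c > 0$ (depending on the edge directions of $G$ at $v$) such that $u \cdot (x - v) \leq -c\,|x - v|$ for all $x$ in the tangent cone at $v$, and hence for all $x \in G$ in a neighborhood of $v$. Consequently $C_t \subset v + (t/c) B$ for small $t$, so $\V(C_t) \leq \omega_n (t/c)^n$. I expect this cap volume estimate to be the main (though modest) obstacle; the rest of the argument is bookkeeping built on the preparatory lemmas.

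To conclude, I would argue by contradiction: if $C \cdot d_\H(K,L)^p \leq d_G(K,L)$ held for every $K,L \in \mathcal K(G)$ with some fixed $p < n$ and $C > 0$, then combining the above with $d_G \leq \rho_G$ from~\eqref{eqn: rhoG-dG-qisometry} would yield
\[
C\, t^p \leq C \cdot d_\H(C_t, S_t)^p \leq d_G(C_t, S_t) \leq \rho_G(C_t, S_t) \leq 2^n \omega_n c^{-n}\, t^n.
\]
Dividing by $t^p$ and letting $t \to 0^+$ then forces $C \leq 0$, the desired contradiction, proving that no H\"older bound with exponent $p < n$ can hold.
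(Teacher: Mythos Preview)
Your proposal is correct and follows essentially the same approach as the paper: both arguments apply Lemma~\ref{lemm: cap-counterexample} at a vertex of $G$, establish $\V(C_t)=O(t^n)$ from the polyhedral structure, and combine this with $d_\H(C_t,S_t)\geq t$ and $d_G\leq\rho_G\leq 2^n\V(C_t)$ to contradict any H\"older bound with exponent $p<n$. The only cosmetic differences are that the paper picks its direction $u$ toward the farthest point of $G$ and reads off $\V(C_t)=At^n$ from the exact pyramid shape of the cap, whereas you pick $u$ in the interior of the normal cone at a vertex and bound $\V(C_t)$ via the containment $C_t\subset v+(t/c)B$; your final chain $C\,t^p\leq 2^n\omega_n c^{-n}t^n$ is in fact slightly cleaner since it avoids invoking part~(2) of the lemma.
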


\begin{proof}

Without loss of generality, we translate $G$ such that $0 \in \text{int}(G)$. 

Since $G$ is compact, we can choose $x \in G$ which maximizes $|x|$, which must then be a vertex of $G$ by the structure of convex polytopes. Let $u = x/|x|$; recalling Definition \ref{def: support-function}, we then know that $h_G(u)$ is achieved by $u\cdot  x$. 

For any $t \geq 0$, we take $C_t$ and $S_t$ as constructed in Lemma \ref{lemm: cap-counterexample} using normal vector $u$. Since $G$ is a convex polytope and thus has a finite number of vertices, for all $t$ sufficiently small, $C_t$ is an $n$-dimensional pyramid with base $S_t$ and height $t$; more specifically, $C_t = x \, \hull\, S_t$. As a result, we know $\V(C_t) = At^n$ for some constant $A>0$. Consequently, by Lemma \ref{lemm: cap-counterexample} parts (1) and (3) and Equation \ref{eqn: rhoG-dG-qisometry}, \begin{align*}
    d_\H(C_t, S_t)^n \geq t^n &= \frac{1}{A}\V(C_t)\geq  \frac{1}{2^nA} \left(\V(G+C_t)-\V(G+S_t)\right)\\
    &=\frac{1}{2^nA} \rho_G(C_t, S_t)\geq \frac{1}{2^nA}d_G(C_t, S_t),
\end{align*}
for all $t$ sufficiently small. 

Assume now for contradiction that there exists some $C >0$ and $p <n$ such that $$C\cdot d_\H(K, L)^p \leq d_G(K, L)$$ holds for all $K,L\in\K(G)$. By (2) of Lemma \ref{lemm: cap-counterexample}, we then have some $C'>0$ such that $$C' \cdot d_\H(C_t, S_t)^p \leq d_\H(C_t, S_t)^n$$ for all $t$ sufficiently small, which is clearly untrue.

\end{proof}

Finally, we show that for a general convex body $G$, $\frac{n+1}{2}$ is the lower bound on the set of H\"older exponents for the lower H\"older bound of $d_\H$ on $d_G$. 

\begin{theorem}\label{thm: n+1/2 optimal}
Let $G\subset\R^n$ be any convex body. Then if $p<\frac{n+1}{2}$, there is no $C>0$ such that
\begin{equation}C\cdot d_\H(K,L)^p\leq d_G(K,L)\label{eqn: n+1/2 optimality}\end{equation}
holds for all $K,L\in\K(G)$.
\end{theorem}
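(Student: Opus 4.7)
The plan is to mimic the structure of the proof of Theorem~\ref{thm: polytope-optimality}, using the families $(C_t, S_t)$ from Lemma~\ref{lemm: cap-counterexample} as the counterexamples. The only change is the choice of direction $u$: instead of pointing at a vertex (as in the polytope case, which gave a pyramidal cap of volume $\asymp t^n$), I would pick $u$ so that the cap $C_G(u,t)$ is squeezed inside a spherical cap of volume $O(t^{(n+1)/2})$.

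To locate the direction, fix any $c \in \mathrm{int}(G)$ and set $R := \max_{y \in G} |y-c|$; write $B^{*} := c + RB$. The maximum is attained at some $p \in \partial G \cap \partial B^{*}$, and $G \subset B^{*}$. Setting $u := (p-c)/R$, the outward normal of $B^{*}$ at $p$ is automatically a supporting direction for $G$ at $p$ with $h_G(u) = u \cdot p = h_{B^{*}}(u)$. Consequently $C_t := C_G(u,t) \subset C_{B^{*}}(u,t)$ for every $t > 0$.

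Combining Lemma~\ref{lemma: cap-vol-lemma} with the elementary bounds $\sin\theta \leq \theta$ and $\arccos(1-s) \leq \pi\sqrt{s/2}$ (the latter from the convexity of $\arcsin$ on $[0,1]$) then gives
$$\V(C_t) \leq \V(C_{B^{*}}(u,t)) \leq A\, t^{(n+1)/2}$$
for all sufficiently small $t$, where $A > 0$ depends only on $n$ and $R$. Since $S_t \subset C_t$, we have $C_t \hull S_t = C_t$, so Lemma~\ref{lemm: cap-counterexample}(3) together with \eqref{eqn: rhoG-dG-qisometry} yield
$$d_G(C_t, S_t) \leq \rho_G(C_t, S_t) = \V(G + C_t) - \V(G + S_t) \leq 2^n \V(C_t) \leq 2^n A\, t^{(n+1)/2}.$$
Combined with $d_\H(C_t, S_t) \geq t$ from Lemma~\ref{lemm: cap-counterexample}(1), any hypothetical constant $C > 0$ satisfying $C\, d_\H(K,L)^p \leq d_G(K,L)$ on all of $\mathcal K(G)$ would force $C \leq 2^n A\, t^{(n+1)/2-p}$, which tends to $0$ as $t \to 0^{+}$ when $p < (n+1)/2$, a contradiction.

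I anticipate no real obstacle beyond the choice of direction; once the circumscribing ball $B^{*}$ is fixed, the volume estimate is a direct recycling of the spherical cap calculation already used to prove Theorem~\ref{thm: c2-holder-lower}. Conceptually, the point is that while an arbitrary convex body need not admit an inscribed rolling ball, it always admits a circumscribing ball tangent at some boundary point, and this one-sided ``smoothness from outside'' is exactly what is needed to control $\V(C_t)$ from above at the critical rate $t^{(n+1)/2}$.
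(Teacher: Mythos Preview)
Your proposal is correct and follows essentially the same approach as the paper: choose a circumscribing ball tangent to $G$ at a point of maximal distance from the center, use the resulting cap inclusion $C_G(u,t)\subset C_{B^*}(u,t)$ together with the spherical cap volume estimate from Lemma~\ref{lemma: cap-vol-lemma}, and finish via Lemma~\ref{lemm: cap-counterexample}. The only cosmetic differences are that the paper takes the origin as the center of the circumscribing ball rather than an arbitrary $c\in\mathrm{int}(G)$, and it invokes part~(2) of Lemma~\ref{lemm: cap-counterexample} in the final step, whereas your direct bound $C\le 2^nA\,t^{(n+1)/2-p}\to 0$ sidesteps that part entirely.
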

\begin{proof}
Similar to the proof of Theorem \ref{thm: polytope-optimality}, we will show that $C_t$ and $S_t$ as defined in Lemma \ref{lemm: cap-counterexample} are a family of sets that serve as a contradiction to the inequality \ref{eqn: n+1/2 optimality}.

Let $x\in G$ be such that $|x|\geq|y|$ for all $y\in G$ (such a point exists by compactness of $G$). Let $u \in \mathbb{S}^{n-1}$ and $r > 0$ be such that $x=ru = h_G(u)u$. Then the hyperplane
$$H=\{y\in\R^n:u\cdot y=r\}$$
supports $rB$ at $ru$, but also supports $G$ at $ru$, since $ru\in H\cap G$ and $G\subset rB$.
Now, for $t\geq0$, consider $C_t$ and $S_t$ as constructed in Lemma \ref{lemm: cap-counterexample} with our $u$ and $G$. By Lemma \ref{lemm: cap-counterexample} and Equation \ref{eqn: rhoG-dG-qisometry}, for all $t\geq0$,
$$d_G(C_t,S_t)\leq\V(G+C_t)-\V(G+S_t)\leq 2^n\V(C_t).$$
But additionally, for all $t\geq0$,
$$C_t\subset C_{rB}(u, t).$$
The set on the right hand side is a spherical cap of radius $r$ and height $t$. The volume of such a cap is, by Lemma \ref{lemma: cap-vol-lemma},
\begin{equation*}
\V(C_{rB}(u, t)) =\omega_{n-1}r^n\int_0^{\arccos(1-t/r)}(\sin\theta)^n\,\mathrm{d}\theta.
\end{equation*}
From the estimates $\sin\theta\leq\theta$ and $\arccos(1-x)\leq\pi\sqrt{x/2}$ for $0\leq x\leq1$, we obtain
\begin{align*}
d_G(C_t,S_t)&\leq2^n\V(C_t)\leq\omega_{n-1}r^n\int_0^{\pi\sqrt{t/2r}}\theta^n\,\mathrm{d}\theta=\frac{\omega_{n-1}\pi^{n+1}r^n}{n+1}\left(\frac{t}{2r}\right)^{\frac{n+1}{2}}.
\end{align*}
By (1) in Lemma \ref{lemm: cap-counterexample},
it follows 
\begin{equation}\label{eqn: n+1/2 upper}
    d_G(C_t, S_t) \leq C'd_\H(C_t, S_t)^{\frac{n+1}{2}}
\end{equation}
where $C' = \frac{\omega_{n-1}\pi^{n+1}}{n+1}(2r)^{\frac{n-1}{2}} > 0$.
Combining (2) of Lemma \ref{lemm: cap-counterexample} with Equation \ref{eqn: n+1/2 upper}, we deduce that the inequality 
$$C\cdot d_\H(K,L)^p\leq d_G(K,L)$$
does not hold for for all $K$ and $L$ for any choice of $C > 0$ when $p<\frac{n+1}{2}$. 
\end{proof}

\bibliographystyle{abbrv}
\bibliography{bib}

\noindent{\sc A. Aitokhuehi, Rice University}\\
{\tt ayo.aitokhuehi@rice.edu}\vspace{0.1in}

\noindent{\sc B. Braiman, University of Wisconsin-Madison}\\
{\tt braiman@wisc.edu}\vspace{0.1in}

\noindent{\sc D. Cutler, Tufts University}\\
{\tt david.cutler@tufts.edu}\vspace{0.1in}

\noindent{\sc T. Darvas, P. Gupta, V. Pidaparathy, University of Maryland}\\
{\tt tdarvas@umd.edu, pgupta8@umd.edu, pvasanth@umd.edu }\vspace{0.1in}

\noindent{\sc R. Deaton, Georgia Institute of Technology}\\
{\tt rdeaton9@gatech.edu}\vspace{0.1in}

\noindent{\sc J. Horsley, University of Utah}\\
{\tt u0981992@utah.edu}\vspace{0.1in}

\noindent{\sc J. Tang, University of Chicago}\\
{\tt t12821@uchicago.edu}\vspace{0.1in}

\end{document}